\documentclass[12pt]{article}
\usepackage{amssymb,amsmath,epsfig,amscd,eucal,psfrag,amsthm,MnSymbol,enumerate}
\usepackage{graphicx,hyperref}


\newtheorem{theorem}{Theorem}[section]

\newtheorem{lemma}[theorem]{Lemma}
\newtheorem{proposition}[theorem]{Proposition}

\newtheorem{corollary}[theorem]{Corollary}

\newtheorem{question}[theorem]{Question}
\newtheorem{letterthm}{Theorem}
\newtheorem{lettercor}[letterthm]{Corollary}

\theoremstyle{definition}
\newtheorem{definition}[theorem]{Definition}

\theoremstyle{remark}
\newtheorem{remark}[theorem]{Remark}
\newtheorem{example}[theorem]{Example}

\newcommand{\G}{\mathcal{G}}

\newcommand{\Z}{\mathbb{Z}}

\newcommand{\wh}{\widehat}

\input xy
\xyoption{all}

\title{Profinite detection of 3-manifold decompositions}
\author{Henry Wilton\footnote{Department of Pure Mathematics and Mathematical Statistics, Centre for Mathematical Sciences, Wilberforce Road, Cambridge, CB3 0WB, United Kingdom}~~and Pavel Zalesskii\footnote{Department of Mathematics, University of Bras\'ilia, 70910-9000 Bras\'ilia, Brazil}}

\begin{document}

\maketitle

\begin{abstract}
The profinite completion of the fundamental group of a closed, orientable $3$-manifold determines the Kneser--Milnor decomposition. If $M$ is irreducible, then the profinite completion determines the Jaco--Shalen--Johannson decomposition of $M$.
\end{abstract}

When trying to distinguish two compact 3-manifolds $M,N$, in practice the easiest method is often to compute some finite quotients of their fundamental groups, and notice that there is a finite group $Q$ which is a quotient of $\pi_1M$, say, but not of $\pi_1N$. It would be very useful, both theoretically and in practice, to know that this method always works.   The set of finite quotients of a group $\Gamma$ is encoded by the \emph{profinite completion} $\wh{\Gamma}$ (the inverse limit of the system of finite quotient groups), and so one is naturally led to the following question.

\begin{question}\label{Main question}
Let $M$ be a compact, orientable 3-manifold.  To what extent is $\pi_1M$ determined by its profinite completion?
\end{question}

In particular, if $M$ is determined among all compact, orientable 3-manifolds by $\wh{\pi_1M}$, then $M$ is said to be \emph{profinitely rigid}.  If there are at most finitely many compact, orientable 3-manifolds $N$ with $\wh{\pi_1M}\cong\wh{\pi_1N}$, then $M$ is said to have \emph{finite genus}.  More precise versions of Question \ref{Main question} ask which 3-manifolds are profinitely rigid, which have finite genus and whether various properties of $M$ are determined by $\wh{\pi_1M}$.

The results of this paper show that the profinite completion $\wh{\pi_1M}$ determines both the Kneser--Milnor and the JSJ decompositions of $M$. This complements our previous results showing that $\wh{\pi_1M}$ determines the geometry of $M$ \cite{wilton_distinguishing_2017}.  The first theorem concerns the Kneser--Milnor decomposition.

\begin{letterthm}\label{introthm: KM detection}
Let $M,N$ be closed, orientable 3-manifolds with Kneser--Milnor decompositions $M\cong M_1\# \ldots \# M_m \# r(S_1\times S_2)$ and $N\cong N_1\# \ldots \# N_n \# s(S_1\times S_2)$. If $\wh{\pi_1M}\cong\wh{\pi_1N}$ then $m=n$, $r=s$, and up to re-indexing, the image of $\wh{\pi_1M}_i$ is conjugate to $\wh{\pi_1N}_i$ for each $i$.
\end{letterthm}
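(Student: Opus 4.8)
The plan is to reduce the statement to a uniqueness assertion for free product decompositions of profinite groups, and then to invoke the theory of profinite groups acting on profinite trees. Write $G_i=\pi_1M_i$ and $H_j=\pi_1N_j$. By the sphere theorem together with geometrization, each $G_i$ (resp. $H_j$) is a nontrivial group that is either finite or the fundamental group of a closed, orientable, aspherical $3$-manifold; in the latter case it is torsion-free, one-ended and not infinite cyclic. The Kneser--Milnor decomposition gives abstract free product decompositions
\[
\pi_1M\cong G_1*\cdots*G_m*F_r,\qquad \pi_1N\cong H_1*\cdots*H_n*F_s ,
\]
with $F_k$ free of rank $k$. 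Since every free factor of a free product is a retract, it is closed in the profinite topology and inherits its full profinite topology, so applying $\wh{(-)}$ yields free profinite product decompositions
\[
\wh{\pi_1M}\cong\wh{G_1}\amalg\cdots\amalg\wh{G_m}\amalg\wh{F_r},\qquad \wh{\pi_1N}\cong\wh{H_1}\amalg\cdots\amalg\wh{H_n}\amalg\wh{F_s},
\]
where $\wh{F_k}$ is the free profinite group of rank $k$. Fixing an isomorphism $\wh{\pi_1M}\cong\wh{\pi_1N}$, the theorem becomes the assertion that these two decompositions agree.

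The next step is to verify that each factor $\wh{G_i}$ is \emph{freely indecomposable as a profinite group} and is not procyclic, and likewise for each $\wh{H_j}$. When $G_i$ is finite this is immediate, since a free profinite product of two nontrivial profinite groups is infinite, as is $\wh{\Z}$. When $G_i$ is infinite, $M_i$ is aspherical, so $G_i$ is an orientable $PD^3$ group; using that $3$-manifold groups are good and residually finite, $\wh{G_i}$ is a profinite $PD^3$ group. A profinite $PD^n$ group with $n\ge 2$ cannot act on a profinite tree with finite edge stabilizers without a global fixed point --- the profinite analogue of the fact that such a group does not split over a finite subgroup --- so $\wh{G_i}$ admits no nontrivial free profinite product decomposition, and it is not procyclic because it has cohomological dimension $3$ while $\wh{\Z}$ has cohomological dimension $1$.

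Finally I would apply the uniqueness of the (profinite) Grushko decomposition: a finitely generated profinite group admits, up to conjugacy of the factors and up to the rank of the free part, a unique decomposition as a free profinite product of finitely many freely indecomposable, non-procyclic factors together with a free profinite group. Equivalently, $\wh{\pi_1M}$ acts on a profinite tree with trivial edge stabilizers whose nontrivial vertex stabilizers are exactly the conjugates of the $\wh{G_i}$; by profinite acylindrical accessibility this action is maximal, so this conjugacy-invariant family of subgroups, together with the rank of the complementary free part, is an invariant of $\wh{\pi_1M}$ alone. Comparing with the analogous data for $\wh{\pi_1N}$ through the fixed isomorphism forces $m=n$ and, after re-indexing, $\wh{G_i}$ conjugate to $\wh{H_i}$; killing the closed normal subgroup generated by these vertex stabilizers identifies $\wh{F_r}$ with $\wh{F_s}$, and comparing abelianizations gives $r=s$.

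The main obstacle is the uniqueness step. Free profinite product decompositions are genuinely non-unique in general, so the difficulty is to isolate the correct conjugacy-invariant object --- the maximal free splitting, with its freely indecomposable, non-procyclic vertex groups --- and to prove its uniqueness; this is precisely where the profinite Bass--Serre theory and a profinite accessibility theorem do the work. A secondary technical point is the transfer of asphericity to the profinite completion, i.e.\ that $\wh{G_i}$ is a profinite $PD^3$ group when $M_i$ is a closed, orientable, aspherical $3$-manifold, which relies on goodness of $3$-manifold groups.
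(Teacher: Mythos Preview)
Your setup coincides with the paper's: pass to profinite free products, and use goodness together with the profinite $PD^3$ structure to obtain the fixed-point property for each $\wh{G_i}$ (and $\wh{H_j}$) acting on a profinite tree with trivial edge stabilizers. The divergence is in the uniqueness step. You package the fixed-point property as ``freely indecomposable, non-procyclic'' and then reach for a general \emph{profinite Grushko theorem} or a profinite accessibility statement to conclude that the family of factors is an invariant. As you yourself flag, this is the real obstacle; uniqueness of free profinite product decompositions is delicate, and it is not clear what off-the-shelf result you would cite.

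The paper sidesteps this entirely with a short two-tree symmetry argument. Let $\wh{S}$ and $\wh{S}'$ be the profinite Bass--Serre trees of the two decompositions. By the fixed-point property you already proved, each $\wh{H_j}$ fixes a vertex of $\wh{S}$ and is therefore conjugate into some $\wh{G_i}$; symmetrically each $\wh{G_i}$ is conjugate into some $\wh{H_{j'}}$. Chaining gives $\wh{H_j}$ contained in a conjugate of $\wh{H_{j'}}$; but $\wh{H_j}$ is a vertex stabilizer in $\wh{S}'$, and since edge stabilizers there are trivial a nontrivial group fixes at most one vertex, so the inclusion is an equality and one gets a bijection between the two families of conjugacy classes. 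Quotienting by the normal closures of the vertex groups then identifies $\wh{F_r}$ with $\wh{F_s}$, whence $r=s$. No Grushko-type uniqueness or accessibility theorem is needed---only the fixed-point result you already have, applied in both directions.

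So your argument is correct up to and including the indecomposability of the factors, but you should replace the appeal to profinite Grushko/accessibility with the direct symmetry argument above; it is both simpler and avoids invoking a result whose precise statement and availability you were unsure of.
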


In particular, $\wh{\pi_1M}$ determines whether or not $M$ is irreducible.  While this work was in progress we discovered that a similar result has also been proved in the pro-$p$ setting by Wilkes, using $l^2$-cohomology \cite[Proposition 6.2.4]{wilkes_profinite_2017}.  Our proof is different, using the continuous cohomology of the profinite completion, and naturally generalizes to our next theorem, which shows that the profinite completion determines the JSJ decomposition of $M$.

\begin{letterthm}\label{introthm: JSJ detection}
Let $M$ and $N$ be closed, orientable, irreducible 3-manifolds, and suppose $\wh{\pi_1M}\cong\wh{\pi_1N}$.  Then the underlying graphs of the JSJ decompositions of $\pi_1M,\pi_1N$ are isomorphic, and corresponding vertex groups have isomorphic profinite completions.
\end{letterthm}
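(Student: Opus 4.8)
The plan is to promote the JSJ decomposition of $M$ to a decomposition of the profinite completion $\wh{\pi_1M}$ and then to recover the pieces from its continuous cohomology, paralleling the proof of Theorem~\ref{introthm: KM detection}. Since $M$ is closed and irreducible, either $\pi_1M$ is finite (and the JSJ is a single spherical vertex, a case disposed of directly) or $M$ is aspherical and $\pi_1M$ is a $PD_3$-group; assume the latter. The JSJ decomposition presents $\pi_1M$ as the fundamental group of a finite graph of groups $\mathcal{G}_M$ whose edge groups are isomorphic to $\Z^2$ and whose vertex groups are the fundamental groups of the Seifert-fibred and atoroidal pieces, each a compact $3$-manifold with toral boundary. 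The first ingredient, which we would establish (it is known for graph manifolds and follows in general from virtual specialness of $3$-manifold groups), is that $\pi_1M$ is \emph{efficient} with respect to $\mathcal{G}_M$: the JSJ tori are separable in $\pi_1M$ and each vertex group is closed and inherits its full profinite topology. Granting this, $\wh{\pi_1M}$ is the fundamental group of the profinite graph of groups obtained by completing every vertex and edge group of $\mathcal{G}_M$, and it acts on a profinite tree $\wh{T}_M$ with quotient graph the underlying graph $\Lambda_M$ of the JSJ, edge stabilizers the conjugates of $\wh{\Z^2}\cong\wh{\Z}\times\wh{\Z}$, and vertex stabilizers the conjugates of the completions $\wh{\pi_1 M_v}$; the same applies to $N$.

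Because $3$-manifold groups and their JSJ vertex groups are good in the sense of Serre, continuous $\F_p$-cohomology of all the profinite completions above coincides with ordinary group cohomology, and the cohomology long exact sequence attached to the action of $\wh{\pi_1M}$ on $\wh{T}_M$ reads
\[
\cdots \to H^n(\wh{\pi_1M};\F_p) \to \bigoplus_{v\in V\Lambda_M} H^n(\wh{\pi_1 M_v};\F_p) \to \bigoplus_{e\in E\Lambda_M} H^n(\wh{\Z}\times\wh{\Z};\F_p) \to H^{n+1}(\wh{\pi_1M};\F_p) \to \cdots.
\]
From this sequence one reads off the cycle rank of $\Lambda_M$ and, crucially, one distinguishes the two types of vertex group: the fundamental group of a Seifert piece carries a canonical infinite cyclic normal subgroup (the regular fibre) whose closure is a normal procyclic subgroup of $\wh{\pi_1 M_v}$ with quotient the profinite completion of a $2$-orbifold group, whereas an atoroidal piece is the fundamental group of a cusped hyperbolic $3$-manifold and is recognised as such — in particular, as having no normal procyclic subgroup in its completion — using the geometry-detection results of \cite{wilton_distinguishing_2017} together with its relative Poincar\'e duality structure. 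Hence each vertex stabilizer of $\wh{T}_M$ ``remembers'' whether it arose from a Seifert or an atoroidal piece.

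Now let $\phi\colon\wh{\pi_1M}\xrightarrow{\sim}\wh{\pi_1N}$ be the given isomorphism and write $G=\wh{\pi_1M}$, so that $G$ acts on $\wh{T}_M$ and, through $\phi$, also on $\wh{T}_N$. By the preceding steps the relevant conjugacy classes of subgroups of $G$ are intrinsic: the maximal Seifert-type and atoroidal-type subgroups (the vertex stabilizers) and the subgroups isomorphic to $\wh{\Z}\times\wh{\Z}$ occurring as intersections of two distinct adjacent vertex stabilizers (the edge stabilizers). Therefore $\phi$ matches these families up, and, invoking the profinite acylindricity of the JSJ action to exclude any other $G$-invariant profinite tree with the same stabilizers, one concludes that $\phi$ carries the $G$-tree $\wh{T}_M$ to a $G$-tree isomorphic to $\wh{T}_N$. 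Passing to quotient graphs gives $\Lambda_M\cong\Lambda_N$, and the induced matching of vertex stabilizers gives isomorphisms of the profinite completions of corresponding vertex groups, as claimed.

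The main obstacle is making the characterisation of the pieces in the second and third steps genuinely invariant under abstract isomorphism of $G$. Two points are delicate. First, one must recognise the profinite completion of a cusped hyperbolic $3$-manifold group among all finitely generated profinite groups — and in particular show it has no normal procyclic subgroup — so that it cannot be mistaken for the completion of a Seifert piece; the results of \cite{wilton_distinguishing_2017} are stated for closed manifolds and must be pushed to the bounded, relative setting. Second, one must handle the small Seifert pieces — those fibring over a base orbifold of non-negative Euler characteristic, such as $T^2\times I$ or the twisted $I$-bundle over the Klein bottle — for which the fibre subgroup need not be canonical and which are precisely the source of the known failures of profinite rigidity among graph manifolds; these must be analysed by hand, using the structure of the $PD_3$-pair and the way the fibre interacts with the adjacent edge tori. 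Pinning the pieces down cohomologically while correctly treating these exceptional cases is where essentially all the work lies; given that, the translation between graphs of profinite groups and profinite tree actions, and the cohomology computations, are formal consequences of profinite Bass--Serre theory and of the goodness of $3$-manifold groups.
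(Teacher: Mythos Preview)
Your proposal sets up the profinite JSJ tree correctly, but the decisive step --- passing from ``the vertex and edge stabilizers are intrinsically recognisable'' to ``$\phi$ carries $\wh{T}_M$ to $\wh{T}_N$'' --- is asserted rather than proved. Knowing the conjugacy classes of maximal Seifert-type and atoroidal-type subgroups, and of the $\wh{\Z}^2$ edge groups, does not by itself determine the tree: you would need a profinite analogue of JSJ uniqueness (a deformation-space statement) for acylindrical actions, and no such result is available in the literature or supplied in your outline. The sentence ``invoking the profinite acylindricity of the JSJ action to exclude any other $G$-invariant profinite tree with the same stabilizers'' is exactly where a theorem is required. Likewise, the Mayer--Vietoris sequence may recover the first Betti number of $\Lambda_M$, but it does not reconstruct the graph or the incidence of pieces.

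The paper's argument avoids this uniqueness problem entirely by proving \emph{non-splitting} results: for each JSJ piece $N_v$ of $M$, it shows that $\wh{\pi_1N_v}$, acting via $f$ on $\wh{T}_{M'}$, has a \emph{unique} fixed vertex. For closed pieces this is a profinite $PD^3$ statement (no splitting over subgroups of cohomological dimension $\leq 1$); for cusped hyperbolic pieces it is obtained by Dehn filling to a closed hyperbolic manifold and reducing to the $PD^3$ case; for major Seifert pieces the central procyclic subgroup must act trivially on any invariant subtree, contradicting acylindricity unless a vertex is fixed. One then checks, via an analysis of normalizers of $\wh{\Z}^2$ subgroups and of the minor pieces, that adjacent vertices of $\wh{T}_M$ are sent to vertices at distance at most one in $\wh{T}_{M'}$, yielding an equivariant graph map $\phi$; the symmetric map $\psi$ gives an inverse. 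The cohomological classification you sketch is therefore not needed, and the two difficulties you flag (recognising cusped hyperbolic completions among all profinite groups, and handling small Seifert pieces by hand) are replaced by the single mechanism of unique fixed points, which the minor pieces fit into without special pleading.
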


See Theorem \ref{thm: main} for a more precise statement, phrased in terms of \emph{profinite Bass--Serre trees}.  Partial results along the lines of Theorem \ref{introthm: JSJ detection} have also been obtained by Wilkes \cite[Theorem I]{wilkes_profinite_2017}.

In \cite[Theorem 8.4]{wilton_distinguishing_2017}, it was shown that the profinite completion of the fundamental group of a closed, orientable 3-manifold $M$ determines the geometry of $M$.  As an immediate consequence of Theorem \ref{introthm: JSJ detection}, we can extend this result to the case with toral boundary.  Recall that $\overline{H}$ denotes the closure of a subgroup $H$ in the profinite completion $\overline{G}$.

\begin{lettercor}\label{introcor: Geometry with toral boundary}
Let $M,N$ be compact, orientable, irreducible 3-manifolds with non-empty toral boundaries; let $\{P_1,\ldots,P_m\}$ and $\{Q_1,\ldots,Q_n\}$ be conjugacy representatives for the boundary subgroups of $\pi_1M$ and $\pi_1N$ respectively.  Suppose that $\wh{\pi_1M}\cong\wh{\pi_1N}$, that $m=n$, and that the isomorphism takes $\overline{P}_i$ to $\overline{Q}_i$ for each $i$. If $M$ is geometric then $N$ is also geometric, with the same geometry. In particular, $M$ is Seifert fibred if and only if $N$ is Seifert fibred.
\end{lettercor}
\begin{proof}
Let $D$ be the double of $M$ and $E$ be the double of $N$.  Note that $\wh{\pi_1D}\cong\wh{\pi_1E}$, and also that the isomorphism respects the profinite completions of the boundary subgroups of $\pi_1M$ and $\pi_1N$. The result follows from the observation that the geometry of $M$ is reflected in properties of the double $D$.

Indeed, if $M$ is Seifert fibred then so is $D$. In this case, if $M$ is homeomorphic to an interval bundle over the torus then $D$ has Euclidean geometry; otherwise, $M$ and $D$ both have $\mathbb{H}^2\times\mathbb{R}$ geometry. Finally, $M$ is hyperbolic if and only if $D$ has non-trivial JSJ decomposition, and the boundary tori of $M$ are the only JSJ tori of $D$.  Combining these facts with Theorem \ref{introthm: JSJ detection} and \cite[Theorem 8.4]{wilton_distinguishing_2017}, the result follows.
\end{proof}

In light of Theorem \ref{introthm: JSJ detection}, the next step in addressing Question \ref{Main question} is to consider the pieces of the JSJ decomposition.  The Seifert-fibred case has been resolved by Wilkes \cite{wilkes_profinite_2017c}, building on work of Hempel \cite{hempel_some_2014}: Seifert fibred 3-manifolds are not profinitely rigid, but do have finite genus, and Wilkes was able to give a complete description of when two such 3-manifold groups have isomorphic profinite completions; he was subsequently able to extend this to a complete answer to Question \ref{Main question} for graph manifolds \cite[Theorem 10.9]{wilkes_profinite_2018}.  In that paper, Sol-manifolds were not included in the class of graph manifolds. Nevertheless, Sol-manifolds are also well understood: they are not profinitely rigid \cite{funar_torus_2013}, but do have finite genus \cite{grunewald_polycyclic_1980}.  A definitive treatment of the case of Sol-manifolds would be a valuable addition to the literature.

The case of hyperbolic 3-manifolds remains an important open problem.  The complement of the figure-eight knot was shown to be  profinitely rigid by Boileau--Friedl \cite{boileau_profinite_2015} and by Bridson--Reid \cite{bridson_profinite_2015}; see also \cite{bridson_determining_2016} for analogous results for Fuchsian groups and \cite{bridson_profinite_2017} for once-punctured-torus bundles. 

Both \cite{boileau_profinite_2015} and \cite{bridson_profinite_2015}, as well as \cite{bridson_profinite_2017}, rely on results showing that fibredness is a profinite invariant in certain contexts. This has recently been proved in full generality by Jaikin-Zapirain \cite{jaikin_recognition_2017}.   Ueki also recently showed that the profinite completion of a knot group determines the Alexander polynomial of the knot \cite{ueki_profinite_2018}.


The results of this paper are proved by considering profinite Poincar\'e Duality groups.  The main difficulty in the above theorems is to show that profinite completions of 3-manifold groups do not admit unexpected splittings which are not induced by splittings of the underlying group.  It is well known that non-splitting theorems for discrete Poincar\'e Duality groups follow from the Mayer--Vietoris sequence.  As a result of the work of Agol, Wise et al.\ on the Virtual Haken conjecture \cite{agol_virtual_2013,wise_structure_2012}, 3-manifold groups are known to be \emph{good} in the sense of Serre, meaning that the cohomology of the profinite completion is isomorphic to the ordinary cohomology (with coefficients in finite modules). Furthermore, a version of the Mayer--Vietoris sequence is known for efficient decompositions of profinite completions.   The main idea of the proofs of Theorems \ref{introthm: KM detection} and \ref{introthm: JSJ detection} is to prove the analogues for profinite completions of the non-splitting theorems from the discrete case.

\subsection*{Acknowledgements}

The authors are grateful to Peter Kropholler for explaining the results of \cite{kropholler_splittings_1988}, and to Michel Boileau and Stefan Friedl for pointing out Corollary \ref{introcor: Geometry with toral boundary}.  This work was completed while the first author was a participant of the \emph{Non-positive curvature, group actions and cohomology} programme at the Isaac Newton Institute, funded by the EPSRC Grant number EP/K032208/1. The first author is also supported by an EPSRC Standard Grant EP/L026481/1. The second author was supported by CAPES  as part of the `Estagio Senior' programme, and thanks Trinity College and the Department of Pure Mathematics and Mathematical Statistics at the University of Cambridge for their hospitality.

\section{Preliminaries on profinite groups}

\subsection{Profinite trees}

A \emph{graph} $\Gamma$ is a disjoint union $E(\Gamma) \cup V(\Gamma)$ of sets, with two maps $d_0, d_1 : \Gamma \to V(\Gamma)$ that are the identity on the set of vertices $V(\Gamma)$.  For an element $e$ of the set of edges  $E(\Gamma)$, $d_0(e) $ is called the \emph{initial} and $d_1(e) $ the \emph{terminal} vertex of $e$.

\begin{definition}
A \emph{profinite graph} $\Gamma$ is a graph such that:
\begin{enumerate}[(i)]
\item $\Gamma$ is a profinite space (i.e.\ an inverse limit of finite discrete spaces);
\item $V(\Gamma)$ is closed; and
\item the maps $d_0$ and $d_1$
are continuous.
\end{enumerate}
Note that $E(\Gamma)$ is not necessary closed.
\end{definition}
A \emph{morphism} $\alpha:\Gamma\longrightarrow \Delta$ of profinite graphs is a continuous map with $\alpha d_i=d_i\alpha$ for $i=0,1$.

By \cite[Proposition ~1.7]{zalesskii_subgroups_1988}  or \cite[Proposition 2.1.4]{ribes_profinite_2017} every profinite graph $\Gamma$ is an inverse limit of finite quotient graphs of $\Gamma$.

For a profinite space $X$  that is the inverse limit of finite discrete spaces $X_j$, $[[\widehat{\mathbb{Z}} X]]$ is defined to be the inverse limit  of $ [\widehat{\mathbb{Z}} X_j]$, where $[\widehat{\mathbb{Z}} X_j]$ is the free $\widehat{\mathbb{Z}}$-module with basis $X_j$. For a pointed profinite space $(X, *)$ that is the inverse limit of pointed finite discrete spaces $(X_j, *)$, $[[\widehat{\mathbb{Z}} (X,*)]]$ is the inverse limit  of $ [\widehat{\mathbb{Z}} (X_j, *)]$, where $[\widehat{\mathbb{Z}} (X_j, *)]$ is the free $\widehat{\mathbb{Z}}$-module with basis $X_j \setminus \{ * \}$ \cite[Chapter~5.2]{ribes_profinite_2010}.

For a profinite graph $\Gamma$ define the pointed space $(E^*(\Gamma), *)$ as  $\Gamma / V(\Gamma)$ with the image of $V(\Gamma)$ as a distinguished point $*$, and denote the image of $e\in E(\Gamma)$ by $\bar{e}$. 

\begin{definition}
A \emph{profinite tree}  $\Gamma$ is a profinite graph such that the sequence
\[
0 \to [[\widehat{\mathbb{Z}}(E^*(\Gamma), *)]] \stackrel{\delta}{\rightarrow} [[\widehat{\mathbb{Z}} V(\Gamma)]] \stackrel{\epsilon}{\rightarrow} \widehat{\mathbb{Z}} \to 0
\]
is exact, where $\delta(\bar{e}) = d_1(e) - d_0(e)$ for every $e \in E(\Gamma)$ and $\epsilon(v) = 1$ for every $v \in V(\Gamma)$.
\end{definition}

If $v$  and $w$ are elements  of a profinite tree   $T$, we denote by $[v,w]$ the smallest profinite subtree of $T$ containing $v$ and $w$ and call it a \emph{geodesic} (cf.\ \cite[1.19]{zalesskii_subgroups_1988} or \cite[Proposition 2.4.9]{ribes_profinite_2017}).

By definition a profinite group $G$ \emph{acts} on a profinite graph $\Gamma$ if  we have a continuous action of $G$ on the profinite space $\Gamma$ that commutes with the maps $d_0$ and $d_1$.

We shall need the following lemma; its proof is contained in the first eight lines of the proof of \cite[Lemma 2.3]{zalesskii_profinite_1990}.

\begin{lemma}\label{open subgroup} Suppose that a profinite group $G$ acts on a profinite tree $T$ and does not fix any vertex. Then there exists an open normal subgroup $U$ of $G$ that is not generated by its vertex stabilizers.\end{lemma}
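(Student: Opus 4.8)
The plan is to argue by contradiction: suppose every open normal subgroup $U$ of $G$ is generated by the vertex stabilizers $G_v \cap U$ for $v \in V(T)$, and deduce that $G$ fixes a vertex. The underlying strategy mimics the classical Bass--Serre argument that a group acting on a tree with no global fixed point must contain a hyperbolic element, translated into the profinite world where one works with the system of finite quotient graphs $T/U$.

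First I would pass to the finite quotient graphs. Since $T$ is the inverse limit of the finite quotient graphs $T/U$ as $U$ ranges over a cofinal system of open normal subgroups of $G$, and the $G$-action descends to an action of the finite group $G/U$ on the finite graph $T/U$, it suffices to understand the $G/U$-action on each $T/U$. The hypothesis that $U$ \emph{is} generated by its vertex stabilizers should translate, via the structure theory of profinite trees, into the statement that the quotient $U\backslash T$ (equivalently a suitable subgraph of $T/U$) is \emph{simply connected}, i.e.\ a finite tree, and moreover that the $G/U$-action on this finite tree $T/U$ has the property that the whole group $G/U$ is generated by its vertex stabilizers. By the classical theory of group actions on finite trees (Bass--Serre theory), a finite group acting on a finite tree always fixes a vertex; more relevantly, if $G/U$ acts on the finite tree $T/U$ and is generated by vertex stabilizers, then there is no hyperbolic element and the action has bounded orbits, hence a fixed point. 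So for each $U$ the fixed-point set $\Fix(G, T/U)$ is non-empty.

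Next I would assemble these fixed points into a fixed point of $G$ on $T$. The fixed-point sets $\Fix(G, T/U)$ form an inverse system of non-empty closed subsets of the finite discrete spaces $T/U$: if $U' \leq U$ then the natural projection $T/U' \to T/U$ carries $\Fix(G, T/U')$ into $\Fix(G, T/U)$, since a point fixed by all of $G$ upstairs projects to a point fixed by all of $G$ downstairs. An inverse limit of a system of non-empty finite sets is non-empty (compactness), so $\varprojlim_U \Fix(G, T/U) \neq \emptyset$. A point in this inverse limit is precisely a point of $T = \varprojlim_U T/U$ fixed by $G$, and since $V(T)$ is closed and $\Fix$ of each vertex set is the relevant piece, we may arrange the fixed point to be a vertex (the argument on $T/U$ produces a fixed vertex, and vertices form a closed subsystem). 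This contradicts the hypothesis that $G$ fixes no vertex, completing the proof.

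The main obstacle I anticipate is making precise the translation ``$U$ is generated by its vertex stabilizers $\iff$ the relevant quotient graph is a tree on which $G/U$ is generated by vertex stabilizers''. This requires the correspondence between subgroups of $G$ generated by edge/vertex stabilizers and connectedness or simple-connectedness properties of quotient graphs of the profinite tree $T$ — essentially the profinite analogue of the fact that in Bass--Serre theory the subgroup generated by the vertex stabilizers over a subtree is exactly the fundamental group of the corresponding graph of groups. One should be able to extract exactly what is needed from the reference \cite{zalesskii_profinite_1990} (indeed the lemma is stated as being contained in the first eight lines of the proof of \cite[Lemma 2.3]{zalesskii_profinite_1990}), so rather than redevelop this machinery I would cite the structural results there and focus the write-up on the inverse-limit compactness step that converts fixed points in all the finite quotients into a genuine fixed vertex in $T$.
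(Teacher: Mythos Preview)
Your overall strategy---assume every open normal $U\trianglelefteq G$ is generated by its vertex stabilizers and then manufacture a global fixed vertex by compactness---is correct and matches the argument the paper cites from \cite{zalesskii_profinite_1990}. However, two of your intermediate claims are wrong as stated, and repairing them changes which results you must invoke.

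First, it is not true that $T=\varprojlim_U U\backslash T$, nor that $U\backslash T$ is a finite graph: a profinite tree is an inverse limit of finite quotient graphs, but these are not the orbit spaces $U\backslash T$ (take $G$ trivial and $T$ any infinite profinite tree for a trivial counterexample). What \emph{is} true, and is the real content of the translation you flag as the main obstacle, is that if $U$ is (topologically) generated by its vertex stabilizers then $U\backslash T$ is again a \emph{profinite tree}; this is \cite[Proposition~2.5]{zalesskii_subgroups_1988}, used elsewhere in the paper in the proof of Lemma~\ref{lem: non-splitting over cusp}. Second, you therefore need that a finite group acting on a \emph{profinite} tree fixes a vertex---this is \cite[Theorem~2.10]{zalesskii_subgroups_1988}---rather than the elementary statement for finite trees. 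Your side remarks about $G/U$ being generated by its own vertex stabilizers, and about hyperbolic elements, play no role and should be dropped: once $U\backslash T$ is a profinite tree and $G/U$ is finite, a fixed vertex exists automatically. With those two corrections your compactness step is fine: for each $U$ the preimage in $V(T)$ of the fixed-vertex set is the closed non-empty set $\{v\in V(T):G_vU=G\}$, these sets decrease as $U$ shrinks, and any vertex $v$ in their intersection has $G_v$ dense, hence (being closed) equal to $G$.
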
 

When we say that ${\cal G}$ is a \emph{finite graph of profinite groups} we mean that ${\cal G}$ contains the data of the
underlying finite graph, the edge profinite groups, the vertex profinite groups and the attaching continuous maps. More precisely,
let $\Delta$ be a connected finite graph. The data of a  graph of profinite groups $({\cal G},\Delta)$ over
$\Delta$ consists of a profinite group ${\cal G}(m)$ for each $m\in \Delta$, and continuous monomorphisms
$\partial_i: {\cal G}(e)\longrightarrow {\cal G}(d_i(e))$ for each edge $e\in E(\Delta)$.

The definition of the profinite fundamental  group of a connected profinite graph of profinite groups is quite involved (see \cite{zalesskii_fundamental_1989} or \cite[Chapter 6]{ribes_profinite_2017}). However, the profinite fundamental  group $\Pi_1(\G,\Gamma)$ of a finite graph of finitely generated profinite groups $(\G, \Gamma)$ can be defined as the profinite completion of the abstract (usual) fundamental group $\Pi_1^{abs}(\G,\Gamma)$ (using here that every subgroup of finite index in a finitely generated profinite group is open,   \cite[Theorem 1.1]{nikolov_finitely_2007a}).  The fundamental profinite group
$\Pi_1(\G,\Gamma)$ has the following presentation:
\begin{eqnarray}\label{presentation}
\Pi_1(\mathcal{G}, \Gamma)&=&\langle \G(v), t_e\mid rel(\G(v)),
\partial_1(g)=\partial_0(g)^{t_e}, g\in \G(e),\nonumber\\
 & &t_e=1 \  {\rm for}\ e\in
T\rangle;
\end{eqnarray}
where $T$ is a maximal subtree of $\Gamma$ and
$\partial_0:\G(e)\longrightarrow
\G(d_0(e)),\partial_1:\G(e)\longrightarrow \G(d_1(e))$ are
monomorphisms.

In contrast to the abstract case, the vertex groups of $(\G,
\Gamma)$ do not always embed in $\Pi_1(\G,\Gamma)$. If they do embed,
$(\G,\Gamma)$ is called \emph{injective}. If $(\G,\Gamma)$ is not injective the edge and
vertex groups can be replaced by their images in
$\Pi_1(\G,\Gamma)$, and after this replacement $(\G,\Gamma)$
becomes injective (see \cite[Section 6.4]{ribes_profinite_2017}).

The profinite  fundamental  group $\Pi_1(\G,\Gamma)$ acts on the standard profinite  tree $T$ (defined analogously to the abstract Bass--Serre tree) associated to it, with vertex and edge stabilizers being conjugates of vertex and edge groups, and such that $\Pi_1(\G,\Gamma)\backslash T=\Gamma$ \cite[Proposition 3.8]{zalesskii_subgroups_1988} or \cite[Theorem 6.3.5]{ribes_profinite_2017}. In particular, this applies   to the cases of an amalgamated free product $G=G_1\amalg_HG_2$ ($\Gamma$ is an edge with two vertices) and an HNN-extension $G=G_1\amalg_H$ ($\Gamma$ is a loop); if $(\G,\Gamma)$ is injective and, in the  case of an amalgamated free product, $G_1\neq H\neq G_2$,  we say that $G$ \emph{splits} over $H$.

\begin{example}\label{graph group completion}
If $G=\pi_1(\G,\Gamma)$ is the fundamental group of a finite graph of (abstract) groups then one has the induced graph of profinite completions of edge  and vertex groups $(\widehat\G,\Gamma)$ and  a natural homomorphism $G=\pi_1(\G,\Gamma)\longrightarrow \Pi_1(\widehat\G,\Gamma)$. It is an embedding if $\pi_1(\G,\Gamma)$ is residually finite. In this case $\Pi_1(\wh{\G},\Gamma)$ is simply the profinite completion  $\widehat{G}$. Moreover, if  the edge  groups $\G(e)$     are separable in $G$ then  the standard tree $T_G$  naturally embeds in the standard profinite tree $\widehat T_G$  (see \cite[Proposition 2.5]{cotton-barratt_detecting_2013}).  In particular this is the case  if edge groups are finitely generated and $G$ is subgroup separable.
\end{example}

\subsection{Profinite Poincar\'e duality groups}

In this section we collect the facts about profinite groups that we will need.    The following results are all profinite analogues of well known results in the setting of discrete groups.  Let $\Z_p$ denote the ring of $p$-adic integers.

\begin{definition}[\cite{symonds_cohomology_2000}]
Let $p$ be a prime.   A profinite group $G$ of type $p$-$FP_{\infty}$  is called a \emph{Poincar\'e duality group} at $p$ of dimension $n$ if   $cd_p(G)=n$ and
\[
\begin{array}{ll}
H^i(G,\Z_p[[G]])= 0, & \mbox{ if } i\not= n, \vspace{0.2cm}\\
H^n(G,\Z_p[[G]])\cong \Z_p & \mbox {(as abelian groups)}.
\end{array}
\]
We say that such a group $G$  is a \emph{profinite $PD^n$-group at $p$}.
\end{definition}

If $G$ is a profinite group with $cd_p(G)< \infty$ and $U$ is an open subgroup of $G$, then $G$ is a profinite $PD^n$-group at $p$ if and only if $U$ is a profinite $PD^n$-group at $p$ (see \cite[Remark 4.2.9]{symonds_cohomology_2000}).

The proofs of our main results rely on the following lemma.  In the discrete case, the corresponding result is Strebel's theorem \cite{strebel_remark_1977}.  In the profinite case, this is Exercise 5(b) on p.\ 44 of \cite{serre_galois_1997}.  The reader is referred to \cite[Section 2.3]{ribes_profinite_2010} for the definition of \emph{supernatural numbers}.

\begin{lemma}\label{lem:PD}
Let $G$ be a profinite $PD^n$ group at $p$ and $H$  a closed subgroup of $G$ such that the supernatural number $p^{\infty}$ divides $[G:H]$. Then $cd_p(H)<n$.
\end{lemma}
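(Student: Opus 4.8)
The plan is to mimic the classical proof of Strebel's theorem, transported into the profinite setting using continuous cohomology of profinite groups. The key algebraic input is the duality isomorphism $H^i(G,M)\cong H_{n-i}(G,M)$ (cup product with the fundamental class in $H^n(G,\Z_p[[G]])\cong\Z_p$) for a profinite $PD^n$-group $G$ at $p$, valid for $M$ a discrete torsion $\Z_p[[G]]$-module, together with Shapiro's lemma for the continuous (co)homology of profinite groups: $H^i(H,N)\cong H^i(G,\mathrm{Coind}_H^G N)$ and $H_i(H,N)\cong H_i(G,\Z_p[[G]]\wh\otimes_{\Z_p[[H]]} N)$ for $H$ closed in $G$. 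These standard facts are all available for groups of type $p$-$FP_\infty$.

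First I would reduce to showing $H^n(H,M)=0$ for every discrete torsion $\Z_p[[H]]$-module $M$; since $cd_p(H)\le cd_p(G)=n<\infty$, vanishing of $H^n$ with all such coefficients forces $cd_p(H)<n$ (one checks that torsion modules suffice to detect cohomological dimension in this range). Next, for such an $M$, Shapiro's lemma gives $H^n(H,M)\cong H^n(G,\mathrm{Coind}_H^G M)$, where $\mathrm{Coind}_H^G M$ is again a discrete torsion $\Z_p[[G]]$-module. Applying Poincar\'e duality for $G$, this equals $H_0(G,\mathrm{Coind}_H^G M)$, the module of coinvariants $(\mathrm{Coind}_H^G M)_G$. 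So the whole statement comes down to the purely module-theoretic claim: if $p^\infty\mid [G:H]$, then for every discrete torsion $\Z_p[[H]]$-module $M$ the coinvariants $(\mathrm{Coind}_H^G M)_G$ vanish.

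To prove that vanishing, I would argue as in the discrete case. An element of $\mathrm{Coind}_H^G M=\mathrm{Hom}_{\Z_p[[H]]}(\Z_p[[G]],M)$ is a continuous map $f:G\to M$ with $f(hg)=h\cdot f(g)$; the coinvariants are the quotient by the submodule spanned by $f-gf$. The hypothesis $p^\infty\mid[G:H]$ means that for every open subgroup $U$ with $H\le U\le G$ the index $[G:U]$ can be made divisible by arbitrarily large powers of $p$ — equivalently, there is a descending chain of open subgroups $H\le\cdots\le U_{k+1}\le U_k\le\cdots\le G$ with $p^k\mid[G:U_k]$. Given a continuous $f$, it factors through some finite quotient, hence is constant on cosets of some open $U\supseteq H$; choosing $U_k\subseteq U$ with $p^k\mid[G:U_k]$ and summing $f$ over a transversal of $U_k$ in $G$ exhibits, modulo the augmentation submodule, a relation forcing $p^k$ times the class of $f$ to be divisible in a way that, combined with $M$ being torsion (so killed by some $p^N$) and $p$-divisibility of $\mathrm{Coind}$, kills the class; letting $k\to\infty$ gives $(\mathrm{Coind}_H^G M)_G=0$. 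This averaging-over-a-transversal step, and making it continuous and compatible with the inverse limit defining $\mathrm{Coind}$, is the main obstacle: one must be careful that the relevant sums converge in the profinite topology and that the torsion bound on $M$ interacts correctly with the unbounded $p$-divisibility coming from $[G:H]$.

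An alternative, perhaps cleaner, route avoids the explicit averaging: filter $H$ as an inverse limit over open $U$ with $H\le U\le G$, so that $H_0(G,\mathrm{Coind}_H^G M)=\varprojlim_U H_0(G,\mathrm{Coind}_U^G M)=\varprojlim_U M_U^{?}$, and observe that the transition maps are multiplication by the indices $[U':U]$, which are eventually divisible by every power of $p$; since $M$ is $p$-torsion the inverse limit is zero. I would present whichever of these the reader of \cite{serre_galois_1997} Exercise 5(b) is expected to supply, but the homological-algebra packaging via duality plus Shapiro plus a $p$-divisibility argument on coinvariants is the backbone either way.
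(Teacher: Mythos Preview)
The paper does not supply a proof of this lemma; it simply records that the statement is Exercise~5(b) on p.~44 of \cite{serre_galois_1997}. Your outline therefore goes well beyond what the paper offers, and the strategy you propose---Shapiro's lemma combined with Poincar\'e duality for $G$, reducing everything to the vanishing of a module of $G$-coinvariants---is the standard route and is essentially what Serre's exercise asks the reader to carry out.

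Two points need tightening in your ``cleaner'' alternative. First, the limit runs the wrong way: since $H=\varprojlim_U U$ over open $U\supseteq H$, one has $\mathrm{Coind}_H^G M=\varinjlim_U \mathrm{Coind}_U^G M$ (a continuous $H$-equivariant map $G\to M$ to a finite module is automatically $U$-equivariant for some open $U\supseteq H$), and $H_0(G,-)$ commutes with \emph{direct} limits. One then obtains a direct system whose transition maps, as you say, are multiplication by the indices $[U:U']$; choosing $U'\subseteq U$ with $p\mid[U:U']$ (possible because $p^\infty\mid[G:H]$) makes these maps zero over $\F_p$, so the direct limit vanishes. Second, to form $\mathrm{Coind}_U^G M$ you need $M$ to be a $U$-module, not merely an $H$-module; so one should first reduce to the case $M=\F_p$ with trivial action, by passing to an open subgroup of $H$ on which the finite simple module $M$ becomes trivial (the hypothesis $p^\infty\mid[G:H]$ is inherited by open subgroups). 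You should also acknowledge the twist by the dualizing module $I_p$ in the duality isomorphism; since $I_p\cong\Z_p$ as an abelian group this does not affect the vanishing argument, but the isomorphism $H^n(G,N)\cong H_0(G,N)$ is only correct after inserting it.
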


The following theorem is the profinite analogue of the well known fact in the discrete setting that $PD^n$ groups cannot split over groups of much smaller cohomological dimension \cite[Proposition V.7.4]{dicks_groups_1989}.

\begin{theorem}\label{thm: Profinite PDn non-splitting}
Suppose that $G$ is a profinite $PD^n$ group at every prime $p$.  If $G$ acts on a profinite tree $T$ with edge stabilizers of $cd(G_e)<n-1$, then $G$ fixes a vertex.
\end{theorem}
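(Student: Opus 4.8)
The plan is to argue by contradiction, combining Lemma~\ref{open subgroup}, the Mayer--Vietoris machinery for efficient decompositions of profinite groups, and the profinite version of Strebel's theorem (Lemma~\ref{lem:PD}). Suppose $G$ does not fix a vertex of $T$. By Lemma~\ref{open subgroup} there is an open normal subgroup $U \trianglelefteq G$ that is not generated by its vertex stabilizers; passing to the action of $U$ on $T$ and collapsing, we obtain a nontrivial splitting of $U$ as a profinite fundamental group of a finite graph of profinite groups in which every edge group $U_e$ is a conjugate (inside $U$) of some $U \cap G_e$, so that $cd_p(U_e) \le cd_p(G_e) < n-1$ for every prime $p$. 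Since an open subgroup of a profinite $PD^n$ group at every prime $p$ is again such a group, it suffices to derive a contradiction from the existence of a nontrivial splitting of $U$ over edge groups of cohomological dimension at most $n-2$. So without loss of generality we may rename $U$ as $G$ and assume $G = \Pi_1(\G,\Gamma)$ for a nontrivial finite graph of profinite groups with every edge group $\G(e)$ satisfying $cd_p(\G(e)) < n-1$ at every $p$.

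Now I would fix a prime $p$ and run the Mayer--Vietoris argument in continuous cohomology with coefficients in $\Z_p[[G]]$. The decomposition of $G$ can be taken to be \emph{efficient} (edge and vertex groups are closed and the decomposition is proper after replacing groups by their images, as recorded in the excerpt), and for such decompositions there is a Mayer--Vietoris exact sequence relating $H^*(G, -)$, $\bigoplus_v H^*(\G(v), -)$ and $\bigoplus_e H^*(\G(e), -)$. Applying it with module $\Z_p[[G]]$, and using Shapiro's lemma to rewrite $H^i(\G(m), \Z_p[[G]]) \cong H^i(\G(m), \Z_p[[\G(m)]]) \widehat{\otimes} (\text{something})$ — more precisely $H^i(\G(m),\Z_p[[G]])$ is a direct sum/product over cosets of $H^i(\G(m),\Z_p[[\G(m)]])$ — the edge terms vanish in degrees $\ge n-1$ because $cd_p(\G(e)) \le n-2$. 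In degree $n$ the Mayer--Vietoris sequence then gives
\[
0 \to \bigoplus_{v} H^n(\G(v),\Z_p[[G]]) \to H^n(G,\Z_p[[G]]) \to 0
\]
(the incoming term $\bigoplus_e H^{n-1}(\G(e),\Z_p[[G]])$ and outgoing term $\bigoplus_e H^n(\G(e),\Z_p[[G]])$ both vanish), so $H^n(G,\Z_p[[G]]) \cong \bigoplus_v H^n(\G(v),\Z_p[[G]])$, and similarly $H^{n-1}(G,\Z_p[[G]]) \to \bigoplus_v H^{n-1}(\G(v),\Z_p[[G]])$ is an isomorphism.

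The key point is then a counting/indexing contradiction, exactly as in the discrete case \cite[Proposition V.7.4]{dicks_groups_1989}. Since $G$ is a $PD^n$ group at $p$, $H^n(G,\Z_p[[G]]) \cong \Z_p$ is "one–dimensional", whereas the right-hand side is a sum indexed by the vertices, each summand $H^n(\G(v),\Z_p[[G]])$ being a coinduced module that is either $0$ (if $cd_p(\G(v)) < n$, e.g.\ if $[G:\G(v)]$ is divisible by $p^\infty$, which is where Lemma~\ref{lem:PD} enters) or an infinite sum/product of copies of $H^n(\G(v),\Z_p[[\G(v)]])$ over the coset space $\G(v)\backslash G$ when $\G(v)$ is open; a nontrivial graph of groups forces either two or more vertices to contribute, or a single vertex group of infinite index to contribute, and in both cases the rank on the right exceeds $1$ — contradiction. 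Concretely: if some $\G(v)$ is open in $G$ then it is itself a $PD^n$ group at $p$ and must equal $G$, which makes the splitting trivial; if every $\G(v)$ has infinite index then $p^\infty \mid [G:\G(v)]$ for a suitable choice of $p$ (using that $G$ is $PD^n$ at \emph{every} prime) so by Lemma~\ref{lem:PD} $cd_p(\G(v)) < n$ and the whole right-hand side vanishes, contradicting $H^n(G,\Z_p[[G]]) \cong \Z_p \ne 0$. Hence no nontrivial splitting exists, $U$ is generated by its vertex stabilizers after all, and unwinding Lemma~\ref{open subgroup} forces $G$ to fix a vertex.

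The main obstacle I anticipate is the bookkeeping in the second paragraph: making the profinite Mayer--Vietoris sequence literally applicable requires the decomposition to be efficient (all subgroups closed, inclusions continuous monomorphisms, properness), and one must be careful that collapsing the $U$-action on $T$ and restricting to $U$ genuinely produces such a decomposition with edge groups of controlled cohomological dimension — the subtlety being that $\Pi_1$ of a profinite graph of profinite groups need not be proper a priori, and that edge stabilizers of the $U$-action are $U \cap G_e^g$ rather than $G_e$ themselves, though cohomological dimension can only drop under passing to closed subgroups. A secondary point is choosing the prime $p$ correctly in the infinite-index case so that $p^\infty \mid [G : \G(v)]$; this is where "$PD^n$ at every prime" is used essentially rather than cosmetically.
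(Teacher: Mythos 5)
Your overall strategy (mimic the discrete Mayer--Vietoris proof of \cite[Proposition V.7.4]{dicks_groups_1989}) is in the spirit of the paper's introduction, but as written it has two genuine gaps, and the paper's actual proof is different and avoids both. First, the reduction step is unjustified: from an action of $G$ (or of the open normal subgroup $U$ of Lemma \ref{open subgroup}) on an arbitrary profinite tree $T$ you cannot conclude that $U$ is the profinite fundamental group of a \emph{finite} graph of profinite groups whose edge groups are (conjugates of) the $U\cap G_e$. The quotient $U\backslash T$ is in general an infinite profinite graph, and profinite Bass--Serre theory has no structure theorem recovering such a decomposition from an action; if you instead collapse to a finite quotient graph, the resulting ``edge groups'' are stabilizers of preimages of edges under the collapse, whose cohomological dimension is not controlled by $cd(G_e)$. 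The paper sidesteps this entirely: it never produces a graph-of-groups splitting, but works directly with the tree action via the bound $cd\,G\le\sup\{cd\,G_v,\,cd\,G_e+1\}$ of \cite[paragraph 2.7]{zalesskii_subgroups_1988}, which is exactly the Mayer--Vietoris input in a form valid for any action on a profinite tree.

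Second, and more seriously, the vanishing of the vertex terms is the crux and your argument asserts it rather than proves it. For a closed subgroup $H\le G$ of infinite index it is \emph{not} true that $p^\infty\mid[G:H]$ for some prime $p$: the supernatural index can be infinite with every exponent finite (e.g.\ $\prod_q q$), and in that case a $p$-Sylow of $H$ is open in a $p$-Sylow of $G$, so $cd_p(H)=n$ and Lemma \ref{lem:PD} gives nothing. Moreover you need a \emph{single} prime $p$ with $p^\infty\mid[G:G_v]$ for \emph{all} vertex stabilizers simultaneously, which different vertices might fail for different primes; the hypothesis ``$PD^n$ at every prime'' does not produce such a $p$ -- its role is rather to let you use whichever $p$ the action hands you. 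The paper supplies precisely this missing ingredient: after reducing to an irreducible action (\cite[Lemma 1.5]{zalesskii_profinite_1990}) and dividing by the kernel, \cite[Proposition 2.10 and Lemma 2.7]{zalesskii_profinite_1990} give a free pro-$p$ subgroup acting freely on $T$ for some prime $p$, whence $p^\infty\mid[G:G_v]$ for every vertex $v$ at that one prime; then Lemma \ref{lem:PD} plus the $cd$ bound force $cd_p(G)\le n-1$, contradicting $PD^n$ at $p$. Without an argument producing such a prime, your proof does not go through.
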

 
\begin{proof} By \cite[paragraph 2.7]{zalesskii_subgroups_1988},
\[
{\rm cd}\  G\leq \sup\{{\rm cd}\  G_v, {\rm cd}\ G_e+1\mid v\in V(T), e\in E(T)\}.
\] 
Suppose that $G$ acts on $T$ without fixing a vertex.  We now argue that there exists $p$ such that the supernatural number $p^{\infty}$  divides $[G:G_v]$ for every $v\in V(\Gamma)$,  and deduce a contradiction from Lemma \ref{lem:PD}. 

By \cite[Lemma 1.5]{zalesskii_profinite_1990} or \cite[Proposition 2.4.12]{ribes_profinite_2017} we may assume that the action of $G$ on $T$ is irreducible (i.e.\ does not contain proper $G$-invariant subtrees). If $K$ is the kernel of the action then $G/K$ acts faithfully on $T$.  Hence by \cite[Proposition 2.10 and Lemma 2.7]{zalesskii_profinite_1990}  or \cite[Theorem 4.2.10]{ribes_profinite_2017} $G/K$ contains a free pro-$p$ subgroup acting freely on $T$ and therefore so does $G$, whence $p^\infty|[G:G_v]$ as claimed.
\end{proof}

We will apply these results to discrete groups $\Gamma$ such that the cohomology of $\Gamma$ is closely intertwined with the cohomology of the profinite completion $\widehat{\Gamma}$ -- Serre called such groups `good' \cite[I.2.6]{serre_galois_1997}.

\begin{definition}\label{defn: Good}
A discrete group $\Gamma$ is \emph{good (in the sense of Serre)} if, for any finite $\Gamma$-module $M$, the natural map to the profinite completion $\Gamma\to \wh{\Gamma}$ induces an isomorphism $H^*(\Gamma,M)\cong H^*(\wh{\Gamma},M)$ (where the cohomology of the profinite group $\wh{\Gamma}$ is defined using the continuous $\mathrm{Hom}$ functor).
\end{definition}

It has been noticed in various places (eg. \cite{cavendish_finite_2012}, \cite{aschenbrenner_manifold_2015}; cf.\ \cite{grunewald_cohomological_2008}) that 3-manifold groups are good.  For convenience, we record the result here.

\begin{theorem}\label{thm: Goodness}
If $M$ is a closed 3-manifold then $\pi_1M$ is good.
\end{theorem}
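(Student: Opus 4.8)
The plan is to deduce goodness from geometrization together with three standard closure properties of goodness. First I would record the facts I need. (1) If $1\to N\to \Gamma\to Q\to 1$ is exact with $N$ finitely generated of type $FP_\infty$ and both $N$ and $Q$ good, then $\Gamma$ is good. (2) If $\Gamma$ possesses a finite-index subgroup that is good, then $\Gamma$ is good. (3) If $\Gamma=\pi_1(\mathcal{G},\Delta)$ is the fundamental group of a finite graph of groups all of whose edge and vertex groups are finitely generated and good, and the decomposition is \emph{efficient} --- the edge and vertex groups are closed in the profinite topology on $\Gamma$ and $\Gamma$ induces the full profinite topology on each of them, equivalently the natural map $\Gamma\to\Pi_1(\wh{\mathcal{G}},\Delta)=\wh{\Gamma}$ embeds the discrete Bass--Serre tree $\Gamma$-equivariantly into the profinite one --- then $\Gamma$ is good. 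Facts (1) and (2) go back to exercises of Serre and are proved carefully by Grunewald--Jaikin-Zalesskii; (3) follows by comparing the Mayer--Vietoris sequences of the abstract and profinite graphs of groups. Finally, surface groups, finitely generated free groups, polycyclic-by-finite groups, and finite groups are all good.

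Next I would reduce to the geometric pieces. Passing to the orientation double cover and invoking (2), we may assume $M$ is orientable. By the Kneser--Milnor decomposition $\pi_1M$ is a free product of the $\pi_1M_i$ together with some free factors $\mathbb{Z}$; this is a finite graph of groups with trivial edge groups, which is trivially efficient, so by (3) it suffices to show each $\pi_1M_i$ is good. Factors with finite fundamental group are done, and $\pi_1(S^1\times S^2)=\mathbb{Z}$ is good, so we may assume $M$ is closed, orientable and irreducible, hence aspherical with infinite fundamental group.

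If $M$ is geometric I run through the eight geometries. For the aspherical Seifert-fibred geometries, after passing to a finite cover $\pi_1M$ is a central extension of a surface group or a free group by $\mathbb{Z}$, or is virtually polycyclic, so it is good by (1) and then (2); the $\mathrm{Sol}$ and $\mathbb{E}^3$ cases are polycyclic-by-finite. The hyperbolic case is the heart of the matter: by Agol's virtual fibering theorem (building on Wise's work) a closed hyperbolic $3$-manifold has a finite cover that fibres over the circle, so $\pi_1M$ has a finite-index subgroup of the form $\Sigma\rtimes\mathbb{Z}$ with $\Sigma$ a closed surface group, which is good by (1), whence $\pi_1M$ is good by (2). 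If instead $M$ has nontrivial JSJ decomposition, then $\pi_1M$ is the fundamental group of a finite graph of groups with edge groups $\mathbb{Z}^2$ and vertex groups the fundamental groups of the Seifert-fibred and finite-volume hyperbolic pieces, now with toral boundary; the Seifert pieces are $\mathbb{Z}$-central extensions of free or surface groups, and the cusped hyperbolic pieces are, again via Wise's theorem and Agol's virtual fibering criterion, virtually free-by-cyclic, so all vertex and edge groups are finitely generated and good. Since the JSJ decomposition of a $3$-manifold group is efficient, fact (3) applies and $\pi_1M$ is good.

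The main obstacle is the hyperbolic case, which depends essentially on the Agol--Wise resolution of the Virtual Fibering Conjecture; a secondary technical point is to establish the efficiency of the JSJ decomposition needed to apply (3).
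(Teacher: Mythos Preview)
Your proposal is correct and follows essentially the same route as the paper. The paper's proof is simply a terser version of yours: it cites \cite[Proposition 4.3]{wilton_profinite_2010} for the reduction via Kneser--Milnor and JSJ (your fact (3) plus efficiency), \cite[Proposition 4.2]{wilton_profinite_2010} for the Seifert-fibred pieces, and \cite[Lemmas 3.2, 3.3]{grunewald_cohomological_2008} together with Agol's resolution of the Virtual Fibering Conjecture for the hyperbolic pieces, exactly as you unpack them.
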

\begin{proof}
Since goodness passes to finite extensions, we may assume that $M$ is orientable.  By \cite[Proposition 4.3]{wilton_profinite_2010} and the usual
 Kneser--Milnor and JSJ decompositions, it suffices to prove that Seifert fibred and hyperbolic 3-manifold groups are good.
 The Seifert-fibred case is Proposition 4.2 of the same paper, and the case of closed hyperbolic 3-manifolds follows from the virtually fibred theorem
 \cite{agol_virtual_2013}, by \cite[Lemmas 3.2 and 3.3]{grunewald_cohomological_2008}.
\end{proof}

The next result is the subject of \cite[Theorem 4.1]{kochloukova_profinite_2008} for $PD^3$-groups, and for general $n$ the proof can be repeated
replacing $3$ by n.

\begin{theorem}\label{thm: PDn completion }
If $\Gamma$ is a good $PD^n$ group, then $\wh{\Gamma}$ is $PD^n$  at every $p$.
\end{theorem}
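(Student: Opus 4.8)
The plan is to verify, for each prime $p$, the four conditions defining a profinite $PD^n$ group at $p$: that $\wh\Gamma$ is of type $p$-$FP_\infty$, that $cd_p(\wh\Gamma)=n$, that $H^i(\wh\Gamma,\Z_p[[\wh\Gamma]])=0$ for $i\neq n$, and that $H^n(\wh\Gamma,\Z_p[[\wh\Gamma]])\cong\Z_p$. Fix $p$; to keep orientation bookkeeping out of the way I will assume $\Gamma$ is orientable (the only case needed here), the general case being obtained by first passing to the kernel of the orientation character, which has finite index. The first two conditions are the soft part: a $PD^n$ group is of type $FP$, hence $FP_\infty$, and $cd_p(\Gamma)=n$ since $cd(\Gamma)=n$ and $H^n(\Gamma,\F_p)\neq0$. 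Because $\Gamma$ is good and $FP_\infty$, both properties pass to $\wh\Gamma$ by the standard good-group arguments (as in \cite{kochloukova_profinite_2008}), using that every discrete $p$-torsion $\wh\Gamma$-module is the filtered colimit of its finite submodules.

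For the group-ring cohomology I would use that, since $\Gamma$ is good and $FP_\infty$, continuous cohomology of $\wh\Gamma$ with coefficients in a profinite module $M$ is the inverse limit of $H^*(\Gamma,-)$ over the finite quotients of $M$ (continuity of profinite cohomology; the relevant $\varprojlim^1$ vanishes because $\Gamma$ is $FP$). Writing $\Z_p[[\wh\Gamma]]$ as an inverse limit of the finite modules $(\Z/p^k)[\wh\Gamma/U]$ over open normal $U$, and noting that $(\Z/p^k)[\wh\Gamma/U]$ is, as a $\Gamma$-module, the permutation module $(\Z/p^k)[\Gamma/\Gamma_0]$ with $\Gamma_0$ the preimage of $U$, Shapiro's lemma yields
\[
H^i\big(\wh\Gamma,\Z_p[[\wh\Gamma]]\big)\;\cong\;\varprojlim_{[\Gamma:\Gamma_0]<\infty}H^i(\Gamma_0,\Z_p),
\]
with the corestriction maps as transition maps. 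Each $\Gamma_0$ is a finite-index subgroup of the orientable $PD^n$ group $\Gamma$, hence itself an orientable $PD^n$ group, so Poincar\'e duality rewrites this as $\varprojlim_{\Gamma_0}H_{n-i}(\Gamma_0,\Z_p)$, the transition maps now being those induced by the inclusions $\Gamma_0\into\Gamma_0'$. For $i>n$ this vanishes by the dimension bound already in hand; for $i=n$ it is the inverse limit of the constant system $H_0(\Gamma_0,\Z_p)=\Z_p$ along identity maps, hence $\Z_p$; and the case $i<n$ reduces to the claim that $\varprojlim_{\Gamma_0}H_j(\Gamma_0,\Z_p)=0$ for every $j\geq1$.

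To prove that claim it suffices to show $\bigcap_{\Gamma_0}\mathrm{im}\big(H_j(\Gamma_0,\Z_p)\to H_j(\Gamma,\Z_p)\big)=0$, and likewise with $\Gamma$ replaced by any finite-index subgroup. Given $0\neq a$ in the finitely generated $\Z_p$-module $H_j(\Gamma,\Z_p)$, pick $k$ with $a\notin p^kH_j(\Gamma,\Z_p)$ and, by the universal coefficient theorem, a class $\alpha\in H^j(\Gamma,\Z/p^k)$ pairing nontrivially with the reduction of $a$. Here goodness enters a second time: via Shapiro's lemma and goodness of $\Gamma$ one has $H^j(\Gamma_0,\Z/p^k)\cong H^j(\overline{\Gamma_0},\Z/p^k)$ compatibly in $\Gamma_0$, so that $\varinjlim_{\Gamma_0}H^j(\Gamma_0,\Z/p^k)\cong\varinjlim_{U}H^j(U,\Z/p^k)=H^j(\{1\},\Z/p^k)=0$ for $j\geq1$, by continuity of profinite cohomology along the descending family of open subgroups of $\wh\Gamma$. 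Hence $\mathrm{res}_{\Gamma_0}(\alpha)=0$ for some finite-index $\Gamma_0$, and the adjunction $\langle\alpha,i_*b\rangle=\langle\mathrm{res}_{\Gamma_0}\alpha,b\rangle$ between restriction and the inclusion-induced maps then forces $a\notin\mathrm{im}\big(H_j(\Gamma_0,\Z_p)\to H_j(\Gamma,\Z_p)\big)$. This establishes the claim, and hence the theorem.

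The conceptual heart is this last claim — and in particular the observation that it follows from the vanishing of $\varinjlim_{\Gamma_0}H^j(\Gamma_0,\text{finite})$ for $j\geq1$ — while the main technical obstacle is bookkeeping: the passage to $p$-$FP_\infty$ for $\wh\Gamma$, the inverse-limit formula for $H^*(\wh\Gamma,\Z_p[[\wh\Gamma]])$, and that colimit vanishing all rest on transferring homological finiteness from $\Gamma$ through goodness, and one must check in each case that the coefficient modules that occur — the permutation modules and their limits, and the module co-induced from the trivial subgroup — lie in the range where the good-group and Shapiro machinery applies.
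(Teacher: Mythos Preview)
The paper does not actually prove this theorem: it simply cites \cite[Theorem 4.1]{kochloukova_profinite_2008} for the case $n=3$ and remarks that the same argument works for general $n$. Your write-up is therefore not competing with a proof in the paper, but rather supplying one, and it is essentially correct and follows the same strategy as the cited reference: transfer $p$-$FP_\infty$ and $cd_p$ via goodness, express $H^*(\wh\Gamma,\Z_p[[\wh\Gamma]])$ as an inverse limit over finite quotients using continuity of profinite cohomology and Shapiro, identify the resulting system via goodness with $\varprojlim_{\Gamma_0}H^*(\Gamma_0,\Z_p)$ along corestriction, and then use Poincar\'e duality for the $\Gamma_0$ to convert this to $\varprojlim_{\Gamma_0}H_{n-*}(\Gamma_0,\Z_p)$ along inclusion-induced maps, which one shows vanishes in positive degrees by the colimit-of-restrictions argument.

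Two small points worth tightening. First, in the orientability reduction you should note explicitly that goodness passes to finite-index subgroups (an easy consequence of Shapiro) and that $\wh{\Gamma_0}$ is open in $\wh\Gamma$, so profinite $PD^n$ at $p$ transfers back up. Second, the justification for $H^i(\wh\Gamma,\Z_p[[\wh\Gamma]])\cong\varprojlim H^i(\wh\Gamma,(\Z/p^k)[\wh\Gamma/U])$ is really that $\wh\Gamma$ is of type $p$-$FP_\infty$ (so continuous cohomology commutes with inverse limits of profinite modules), not that $\Gamma$ is $FP$; you use $FP$ for $\Gamma$ separately when identifying $\varprojlim_k H^i(\Gamma_0,\Z/p^k)$ with $H^i(\Gamma_0,\Z_p)$. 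Neither point affects the validity of the argument.
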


We immediately obtain a profinite non-splitting result for good Poincar\'e duality groups.

\begin{corollary}\label{cor: Good PDn groups}
Let $G$ be $PD^n$ group which is good in the sense of Serre. Then any action of $\wh{G}$ on a profinite tree with edge stabilizers of cohomological dimension $n-2$ has a global fixed point.
\end{corollary}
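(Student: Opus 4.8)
The plan is to combine Theorem~\ref{thm: PDn completion } with Theorem~\ref{thm: Profinite PDn non-splitting}, since together they almost immediately yield the claim. First I would invoke Theorem~\ref{thm: PDn completion }: because $G$ is a good $PD^n$ group, its profinite completion $\wh{G}$ is a profinite $PD^n$-group at every prime $p$. This handles the hypothesis in Theorem~\ref{thm: Profinite PDn non-splitting} that $\wh{G}$ be $PD^n$ at every prime. Next I would unwind what it means for $\wh{G}$ to act on a profinite tree $T$ with edge stabilizers of cohomological dimension $n-2$: for each edge $e$ we have $cd(\wh{G}_e)=n-2 < n-1$, so the edge-stabilizer cohomological-dimension hypothesis of Theorem~\ref{thm: Profinite PDn non-splitting} is satisfied (with room to spare). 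Applying Theorem~\ref{thm: Profinite PDn non-splitting} then gives that $\wh{G}$ fixes a vertex of $T$, which is exactly the asserted global fixed point.

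The one point requiring a word of care is the passage from $cd(\wh{G}_e)=n-2$ to $cd_p(\wh{G}_e)<n-1$ for the relevant prime $p$ used inside the proof of Theorem~\ref{thm: Profinite PDn non-splitting}; but since $cd(\wh{G}_e)=\sup_p cd_p(\wh{G}_e)=n-2$, we have $cd_p(\wh{G}_e)\le n-2<n-1$ for every $p$, so this is automatic. I expect this to be entirely routine; there is no real obstacle, as the corollary is designed to be a clean specialization of the two preceding theorems once goodness is in hand. The only thing to double-check is that Theorem~\ref{thm: PDn completion } indeed applies verbatim — it is stated for good $PD^n$ groups and $G$ is assumed to be exactly that — so no additional hypothesis (such as finite generation or residual finiteness beyond what goodness and the $PD^n$ condition already supply) needs to be verified separately.

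In summary, the proof is a two-line deduction: goodness plus the $PD^n$ hypothesis feed into Theorem~\ref{thm: PDn completion } to make $\wh{G}$ a profinite $PD^n$-group at every prime, and then Theorem~\ref{thm: Profinite PDn non-splitting}, applied to the given action on $T$ whose edge stabilizers have cohomological dimension $n-2<n-1$, forces the global fixed point. I would write it exactly in that order, with perhaps one clause noting that $cd$ here means the supremum of the $cd_p$, so the edge-stabilizer bound holds prime-by-prime as required.
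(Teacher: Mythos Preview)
Your proposal is correct and follows exactly the paper's own proof: invoke Theorem~\ref{thm: PDn completion } to conclude that $\wh{G}$ is a profinite $PD^n$-group at every prime, and then apply Theorem~\ref{thm: Profinite PDn non-splitting} using that the edge stabilizers have $cd = n-2 < n-1$. The paper's proof is literally this two-step deduction, so your extra remark about $cd$ versus $cd_p$ is harmless but unnecessary, since Theorem~\ref{thm: Profinite PDn non-splitting} is already stated in terms of $cd$.
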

\begin{proof}
Since $G$ is good, $\widehat G$ is a profinite $PD^n$ group at $p$ for every $p$ by Theorem \ref{thm: PDn completion }, so the result follows from Theorem \ref{thm: Profinite PDn non-splitting}.
\end{proof}

\begin{remark}
The combined hypotheses of goodness and $PD^n$ apply to many examples in dimensions 2 and 3, but are restrictive in higher dimensions.
\end{remark}

Combining all of the above results, we obtain the following fact, which will be extremely useful to us in what follows.

\begin{corollary}\label{cor: No procyclic splitting}
If $M$ is  a closed, orientable, irreducible 3-manifold then any action of $\wh{\pi_1M}$ on a profinite tree with procyclic edge stabilizers has a global fixed point.
\end{corollary}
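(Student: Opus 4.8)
\noindent The plan is to deduce this from the non-splitting theorem for profinite $PD^n$-groups, Theorem~\ref{thm: Profinite PDn non-splitting}, applied with $n=3$. The one subtlety is that \emph{procyclic} does not by itself imply \emph{cohomological dimension $<n-1$}: a nontrivial finite cyclic group has infinite cohomological dimension at the primes dividing its order. So the heart of the matter will be to rule out torsion in the edge stabilizers, and the key point is that $\wh{\pi_1M}$ is torsion-free.

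First I would dispose of the degenerate case in which $\pi_1M$ is finite (so that $M$ is a spherical space form): then $\wh{\pi_1M}=\pi_1M$ is a finite group, and a finite group acting on a profinite tree always fixes a vertex, so there is nothing to prove. So assume from now on that $\pi_1M$ is infinite. Since $M$ is closed, orientable and irreducible, the sphere theorem gives $\pi_2M=0$, and a Hurewicz argument then shows that the universal cover of $M$ is contractible; hence $M$ is aspherical and $\Gamma:=\pi_1M$ is a (discrete, orientable) $PD^3$-group. By Theorem~\ref{thm: Goodness} it is good, so by Theorem~\ref{thm: PDn completion } the profinite completion $G:=\wh{\Gamma}$ is a profinite $PD^3$-group at every prime $p$; in particular $cd_p(G)=3<\infty$ for every $p$.

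From this I would conclude that $G$ is torsion-free: if $x\in G$ had prime order $q$, then $\langle x\rangle\cong\Z/q$ would be a closed subgroup with $cd_q=\infty$, contradicting $cd_q(\langle x\rangle)\le cd_q(G)=3$. Consequently every stabilizer of the $G$-action on a profinite tree $T$ is torsion-free; so if the edge stabilizers $G_e$ are procyclic, each $G_e$ is a torsion-free procyclic profinite group, hence isomorphic to $\prod_{p\in S}\Z_p$ for a set of primes $S$, and therefore $cd(G_e)\le 1<2=n-1$. Theorem~\ref{thm: Profinite PDn non-splitting} then produces the desired global fixed vertex. (One could instead quote Corollary~\ref{cor: Good PDn groups}, but Theorem~\ref{thm: Profinite PDn non-splitting} is the cleaner target, since it allows $cd(G_e)<n-1$ rather than $cd(G_e)=n-2$ exactly, and hence covers the case of trivial edge stabilizers.)

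I expect the main obstacle to be exactly the torsion issue above: without the torsion-freeness of $\wh{\pi_1M}$ one cannot push the cohomological dimension of a procyclic edge group below $n-1$, and establishing that torsion-freeness is what uses the full force of the goodness of 3-manifold groups (Theorem~\ref{thm: Goodness}) together with the transfer of Poincar\'e duality to the profinite completion (Theorem~\ref{thm: PDn completion }). The remaining steps are bookkeeping.
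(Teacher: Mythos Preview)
Your proposal is correct and follows essentially the same route as the paper: dispose of the finite-$\pi_1$ case directly, then in the aspherical case use goodness and the profinite $PD^3$ property to invoke the non-splitting theorem. You are more explicit than the paper about why procyclic edge stabilizers have cohomological dimension at most $1$ (via torsion-freeness of $\wh{\pi_1M}$, which follows from $cd_p(\wh{\pi_1M})<\infty$ for all $p$); the paper's proof simply cites Corollary~\ref{cor: Good PDn groups} and leaves this verification implicit.
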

\begin{proof}
By the Sphere Theorem, irreducible 3-manifolds either have finite fundamental group or are aspherical (see, for instance, \cite[(C.1)]{aschenbrenner_manifold_2015}).   In the first case $\wh{\pi_1M}$ is finite, and the result follows from \cite[Theorem 2.10]{zalesskii_subgroups_1988} or \cite[Theorem 4.1.8]{ribes_profinite_2017}. In the second case, $\pi_1M$ is $PD^3$, and the result follows from Theorem \ref{thm: Goodness} and Corollary \ref{cor: Good PDn groups}.
\end{proof}

\section{The Kneser--Milnor decomposition}

As a warm-up, we show that the profinite completion of a 3-manifold group determines its Kneser--Milnor decomposition.  As noted above, this result can also be obtained using methods from $l^2$-cohomology \cite{wilkes_profinite_2017}. Recall that a closed $3$-manifold $M$ is \emph{irreducible} if every embedded 2-sphere bounds a 3-ball; equivalently, $\pi_1M$ does not admit a non-trivial splitting over the trivial subgroup.

\begin{proposition}\label{prop: Profinite irreducible}
Suppose that $M_1,M_2$ are closed, orientable 3-manifolds.  If $\wh{\pi_1M}_1\cong\wh{\pi_1M}_2$ and $M_1$ is irreducible then so is $M_2$.
\end{proposition}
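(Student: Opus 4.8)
The plan is to argue by contraposition, using the equivalence recalled just above that a closed, orientable $3$-manifold $N$ is irreducible exactly when $\pi_1N$ admits no nontrivial splitting over the trivial subgroup, and to feed such a splitting into Corollary~\ref{cor: No procyclic splitting}. So I would suppose that $M_2$ is reducible and derive a contradiction.

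If $M_2$ is reducible, then $\pi_1M_2$ splits nontrivially over $1$; such a splitting is necessarily a nontrivial free product decomposition $\pi_1M_2\cong A*B$ with $A,B\neq 1$ (the HNN case over the trivial group being $A*\Z$, which is of this form). Since $3$-manifold groups are residually finite, $A$ and $B$ embed in their profinite completions, so $\wh{A}$ and $\wh{B}$ are nontrivial. Because profinite completion carries this free product to the corresponding profinite free product (Example~\ref{graph group completion}), we get $\wh{\pi_1M}_2\cong\wh{A}\amalg\wh{B}$, a proper profinite group which acts on its standard profinite tree $T$ with trivial -- in particular procyclic -- edge stabilizers.

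Next I would check that this action has no global fixed point. If $\wh{\pi_1M}_2$ fixed a vertex $v$, then $\wh{\pi_1M}_2$ would be contained in, hence equal to, the stabilizer $(\wh{\pi_1M}_2)_v$, which is a conjugate of $\wh{A}$ or of $\wh{B}$; after conjugating we may take it to be $\wh{A}$. Composing the inclusion $\wh{A}\hookrightarrow\wh{\pi_1M}_2$ with the continuous retraction $\wh{A}\amalg\wh{B}\to\wh{A}$ that is the identity on $\wh{A}$ and trivial on $\wh{B}$, we would find this retraction is an isomorphism, so the closed normal closure of $\wh{B}$ is trivial and $\wh{B}=1$, a contradiction. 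Hence the action of $\wh{\pi_1M}_2$ on $T$ is fixed-point-free.

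Finally, transporting this action across the hypothesised isomorphism $\wh{\pi_1M}_1\cong\wh{\pi_1M}_2$ yields an action of $\wh{\pi_1M}_1$ on $T$ with procyclic edge stabilizers and no global fixed point. Since $M_1$ is closed, orientable and irreducible, this contradicts Corollary~\ref{cor: No procyclic splitting}, and therefore $M_2$ must be irreducible. I expect the only delicate point to be the bookkeeping in the middle two steps -- identifying $\wh{\pi_1M}_2$ with the profinite free product $\wh{A}\amalg\wh{B}$ and extracting its fixed-point-free action on the standard profinite tree from the profinite Bass--Serre machinery recalled in Section~1 -- after which the conclusion is an immediate application of the non-splitting results assembled in Section~2, principally Corollary~\ref{cor: No procyclic splitting}.
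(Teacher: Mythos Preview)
Your proposal is correct and follows essentially the same route as the paper: assume $M_2$ reducible, pass from the resulting free splitting of $\pi_1M_2$ to a fixed-point-free action of $\wh{\pi_1M}_2\cong\wh{\pi_1M}_1$ on a profinite tree with trivial (hence procyclic) edge stabilizers, and invoke Corollary~\ref{cor: No procyclic splitting}. The paper compresses the middle steps into a single sentence, whereas you spell out the identification $\wh{\pi_1M}_2\cong\wh{A}\amalg\wh{B}$ and the retraction argument ruling out a global fixed point; these details are fine and the argument is the same in substance.
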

\begin{proof}
If $M_2$ were reducible then $\pi_1M_2$ would act on a tree with trivial edge stabilizers and without a global fixed point, and $\wh{\pi_1M}_1\cong\wh{\pi_1M}_2$ would act on a profinite tree with trivial edge stabilizers and without a global fixed point.   This contradicts Corollary \ref{cor: No procyclic splitting}.
\end{proof}

Non-irreducible 3-manifolds admit non-trivial \emph{Kneser--Milnor decompositions}.  If $M$ is a closed, oriented 3-manifold then the Kneser--Milnor decomposition decomposes $M$ as a connect sum
\[
M\cong N_1\#\ldots\#N_m\# F_r
\]
where each $N_i$ is irreducible and $F_r$ is a connect sum of copies of $S^1\times S^2$.  The $N_i$ are uniquely determined, in an appropriate sense.  In particular, the conjugacy classes of the subgroups $\pi_1N_i$ are unique up to reordering, and the integer $r$ is also unique. The reader is referred to \cite[Theorem 1.2.1]{aschenbrenner_manifold_2015} for details.

\begin{theorem}[Profinite Kneser--Milnor]
Consider closed, orientable 3-manifolds with Kneser--Milnor decompositions $M=N_1 \# \ldots \# N_m \# F_{r}$ and \sloppy $M'=N'_1 \# \ldots \# N'_{m'} \# F_{r'}$, where each $N_i$ and $N'_j$ is irreducible and $F_r$ and $F_{r'}$ are connect sums of $S^1\times S^2$'s.  If $\wh{\pi_1M}\cong\wh{\pi_1M'}$ then $m=m'$, $r=r'$, and up to reordering, $\wh{\pi_1N_i}$ is conjugate to $\wh{\pi_1N'_i}$ for each $i$.
\end{theorem}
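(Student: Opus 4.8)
The plan is to pass from the manifold $M$ to a graph-of-groups decomposition of $\pi_1 M$ whose underlying graph is a tree, with the $\pi_1 N_i$ as vertex groups (together with some trivial and infinite-cyclic vertex groups packaging the $S^1\times S^2$ summands and the ``hub'' vertex of the connect-sum tree), all edge groups trivial. Taking profinite completions and using that $\pi_1 M$ is residually finite and goodness of $3$-manifold groups, one gets an action of $\wh{\pi_1 M}$ on a profinite tree $T$ with trivial edge stabilizers and $\wh{\pi_1 M}\backslash T$ the same finite tree; the nontrivial vertex stabilizers are conjugates of the $\wh{\pi_1 N_i}$, and the number of orbits of vertices with trivial stabilizer records $r+1$. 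The whole game is then to show that this profinite Bass--Serre tree is canonically determined by the isomorphism type of $\wh{\pi_1 M}$ as a profinite group, not just by the chosen decomposition.

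First I would fix the invariants that must match. The integer $m$ (number of irreducible summands with infinite fundamental group) plus $r$ are detected cohomologically: by Theorem \ref{thm: PDn completion } each $\wh{\pi_1 N_i}$ with $N_i$ aspherical is a profinite $PD^3$ group at every $p$, and $\wh{\pi_1(S^1\times S^2)}$ has $cd=1$; the continuous cohomology of a nontrivial free profinite product detects the number and isomorphism type of the free factors via the Mayer--Vietoris / Kurosh-type decomposition of $\wh{\pi_1 M}$ as a free profinite product of the $\wh{\pi_1 N_i}$ and a free profinite group of rank $r$. Concretely I would argue: (1) the free profinite group $\wh F_r$ of rank $r$ splits off as the maximal free profinite factor, and its rank $r = r'$ is an invariant of $\wh{\pi_1 M}$ (e.g. via $H^1$ and the structure of the profinite tree after collapsing the finite factors, or by comparing abelianizations modulo the ``Poincaré-duality part''); (2) among the remaining free factors, those of $cd \le 2$ are exactly the $\wh{\pi_1 N_i}$ with $N_i$ having finite fundamental group plus those with $cd \le 2$, while the $PD^3$-at-every-$p$ factors are the aspherical $N_i$; goodness and Theorem \ref{thm: Goodness} guarantee the relevant cohomology is computed by the discrete groups, so $m=m'$ and the multiset of conjugacy classes $\{\wh{\pi_1 N_i}\}$ matches $\{\wh{\pi_1 N'_j}\}$ by uniqueness of the Kurosh decomposition for free profinite products (Ribes, or the profinite Bass--Serre theory already cited).

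The step I expect to be the main obstacle is showing that an \emph{arbitrary} isomorphism $\wh{\pi_1 M}\cong\wh{\pi_1 M'}$ carries the free-profinite-product decomposition of one side to that of the other --- i.e. that the decomposition into ``irreducible'' profinite factors is intrinsic, not an artifact of the chosen splitting. The tool is Corollary \ref{cor: No procyclic splitting}: each $\wh{\pi_1 N_i}$ with $N_i$ irreducible admits no nontrivial action on a profinite tree with procyclic (in particular trivial) edge stabilizers, hence is elliptic in \emph{every} such action of the ambient group; this forces any abstract isomorphism to send each $\wh{\pi_1 N_i}$ into a conjugate of some vertex group of the $M'$-side decomposition, and a cohomological-dimension/Poincaré-duality count (a profinite $PD^3$ subgroup of $cd$ $3$ inside another, using Lemma \ref{lem:PD} to rule out infinite index) upgrades ``into a conjugate'' to ``onto a conjugate.'' Symmetrizing gives the bijection of factors up to conjugacy, and then counting the free part forces $r=r'$. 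I would then assemble these pieces, being careful about the finite-$\pi_1$ summands (handled by \cite[Theorem 2.10]{zalesskii_subgroups_1988}, exactly as in Corollary \ref{cor: No procyclic splitting}) and about the degenerate case $m=0$, to conclude the theorem.
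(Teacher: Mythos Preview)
Your core idea---each $\wh{\pi_1N'_j}$ is elliptic on the profinite Bass--Serre tree of the $M$-decomposition by Corollary~\ref{cor: No procyclic splitting}, hence conjugate into some $\wh{\pi_1N_i}$---is exactly the paper's argument, and the cohomological preamble (detecting $m,r$ in advance via $PD^3$, Mayer--Vietoris, Kurosh, etc.) is unnecessary baggage.

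The genuine gap is your ``upgrade'' from \emph{into} to \emph{onto} via Lemma~\ref{lem:PD}. If $H\leq G$ are both profinite $PD^3$ at every prime, Lemma~\ref{lem:PD} only yields $p^\infty\nmid[G:H]$ for each $p$; the supernatural index could still be infinite (for instance $\prod_p p$), and even finite index would not give equality. Worse, for summands $N_i$ with finite fundamental group the $PD^3$ hypothesis fails outright, so this step does not even apply there. The paper sidesteps this entirely: run the ellipticity argument in both directions \emph{first}, so that each $\wh{\pi_1N'_j}$ ends up conjugate into some $\wh{\pi_1N'_\ell}$; then invoke the elementary fact that a closed subgroup of a profinite group cannot be conjugate to a proper closed subgroup of itself (look at images in each finite quotient, where cardinality forces equality). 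This yields $m=m'$ and the conjugacy of factors uniformly, with no case split between aspherical and finite-$\pi_1$ summands. Finally, $r=r'$ drops out by quotienting both sides by the normal closures of the now-matched nontrivial factors, giving $\wh{F}_r\cong\wh{F}_{r'}$.
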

\begin{proof}
Let $S$ be the Bass--Serre tree of the corresponding decomposition of $\pi_1M$, and let $\wh{S}$ be the corresponding profinite tree for $\wh{\pi_1M}$ on which $\pi_1M$ acts with trivial edge stabilizers. By Corollary \ref{cor: No procyclic splitting}, each profinite completion $\wh{\pi_1N'_i}$ fixes a vertex of $\wh{S}$, and hence is conjugate into some $\wh{\pi_1N_j}$.  By symmetry, each $\wh{\pi_1N_i}$ is conjugate into some $\wh{\pi_1N'_j}$.  Profinite subgroups cannot be conjugate to proper subgroups of themselves, as it would imply the same for some finite image, and for a finite group it is clearly impossible.  Therefore, it follows that  $m=m'$ and the profinite completions of the vertex groups are conjugate. Factoring the normal closures of these subgroups out, we see that $\wh{F}_r\cong\wh{F}_{r'}$ and hence $r=r'$ as claimed.
\end{proof}

\section{Cusped hyperbolic 3-manifolds}

An immediate consequence of Corollary \ref{cor: Good PDn groups} is that, for a closed $3$-manifold $M$, $\wh{\pi_1M}$ does not split over a subgroup of cohomological dimension 0 or 1  (for instance a profinite free group). In this section, we prove some profinite non-splitting results for hyperbolic manifolds with toral boundary.  In the hyperbolic case, we will need a fact from \cite{wilton_distinguishing_2017}, describing the non-procyclic abelian subgroups of $\wh{\pi_1N}$.

\begin{proposition}\label{prop: Cusp uniqueness}
Let $N$ be a finite-volume hyperbolic 3-manifold and $A$ a closed abelian subgroup of $\wh{\pi_1N}$. If $A$ is not procyclic then $A$ is in the closure of a peripheral subgroup of $\pi_1N$, and this peripheral subgroup is unique up to conjugacy.
\end{proposition}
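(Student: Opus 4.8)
The plan is to prove the two assertions — membership in the closure of a peripheral subgroup, and uniqueness of that subgroup up to conjugacy — by exploiting the JSJ-type tree on which $\wh{\pi_1N}$ acts, where $N$ is replaced by its double (or by a convenient closed manifold containing $N$ as a JSJ piece) so that the non-splitting machinery of Section~1 becomes available. Concretely, let $D$ be the double of $N$ along its boundary tori. Then $D$ is a closed, orientable, irreducible $3$-manifold whose JSJ decomposition has the two copies of $N$ as hyperbolic vertex pieces glued along the boundary tori, and $\wh{\pi_1 D}$ acts on the corresponding profinite Bass--Serre tree $\wh{T}$ with $\wh{\pi_1 N}$ as a vertex stabilizer and the closures of the peripheral subgroups of $\pi_1 N$ among the edge stabilizers (these are the closures of $\Z^2$-subgroups, hence profinite $PD^2$ groups, in particular of cohomological dimension $2$ at every prime). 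The key point is that $A \leq \wh{\pi_1 N}$ fixes the vertex $v_0$ stabilized by $\wh{\pi_1 N}$, so I want to show that $A$ also fixes a point in the union of the incident edges — i.e.\ that $A$ stabilizes an edge at $v_0$ — and then identify the corresponding edge group with the closure of a peripheral subgroup.

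The first main step is to show $A$ fixes an edge. Here I would use Lemma~\ref{open subgroup} together with Lemma~\ref{lem:PD}: since $A$ is a closed abelian non-procyclic subgroup, $\wh{\Z}^2$ is a quotient (indeed $A$ contains a copy of $\wh{\Z}_p^2$ for some prime $p$, or more directly $A$ surjects onto $\wh{\Z}^2$), and such a group cannot act on a profinite tree without a fixed point unless it fixes an edge — one argues that an irreducible action of a group containing $\wh{\Z}_p^2$ would produce, via \cite[Proposition 2.10 and Lemma 2.7]{zalesskii_profinite_1990}, a free pro-$p$ subgroup, forcing $\wh{\Z}_p^2$ to contain a free pro-$p$ group of rank $\geq 2$, a contradiction. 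So the action of $A$ on $\wh{T}$ has a fixed edge $e$; since $A \leq \wh{\pi_1 N}$ fixes $v_0$ and edge stabilizers of the JSJ tree at the hyperbolic vertex $v_0$ are exactly the closures of the peripheral subgroups, we conclude $A$ is contained in the closure $\overline{P}$ of a peripheral subgroup $P$ of $\pi_1 N$.

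The second main step is uniqueness. Suppose $A \leq \overline{P_1} \cap \overline{P_2}$ for peripheral subgroups $P_1, P_2$ that are non-conjugate in $\pi_1 N$. In the tree $\wh{T}$, the edges corresponding to $\overline{P_1}$ and $\overline{P_2}$ at $v_0$ are distinct, and $A$ then fixes both, hence fixes the (finite) geodesic between their far endpoints, hence lies in the intersection of two distinct edge stabilizers incident to $v_0$. But distinct peripheral subgroups of a finite-volume hyperbolic $3$-manifold intersect trivially, and this malnormality-type property passes to the profinite completion (this is precisely where I would invoke the separability/efficiency properties of cusped hyperbolic $3$-manifold groups, via \cite{agol_virtual_2013,wise_structure_2012}, to conclude $\overline{P_1} \cap \overline{P_2}$ is trivial or finite): concretely, the closures of the cusp subgroups in $\wh{\pi_1 N}$ form a "malnormal family" because distinct cusps are separated by finite-index subgroups. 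This contradicts $A$ being non-procyclic (in particular infinite), so $P_1$ and $P_2$ are conjugate.

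The hard part is the profinite malnormality statement used in the uniqueness step: one must know not just that distinct peripheral subgroups of $\pi_1 N$ intersect trivially (which is classical), but that their \emph{closures} in $\wh{\pi_1 N}$ intersect trivially, and similarly that a peripheral subgroup is "almost malnormal" in the profinite completion. I expect this to follow from the results of \cite{wilton_distinguishing_2017} on the structure of $\wh{\pi_1 N}$ for cusped hyperbolic $M$ — indeed it is plausible that this very proposition is proved there by a more direct argument about abelian subgroups, in which case the cited reference does the work and the role of this proposition here is simply to package that fact for use in the JSJ argument. If instead one wants a self-contained argument, the cleanest route is the double-$D$ reduction above combined with the fact that $\wh{\pi_1 N}$ acts acylindrically (in the appropriate profinite sense) on the tree dual to the cusps, which again traces back to subgroup separability of geometrically finite subgroups.
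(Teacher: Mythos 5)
There is a genuine gap, and it sits exactly where the content of the proposition lies. In your first main step you pass to the double $D$ of $N$, note that $A\leq\wh{\pi_1N}=G_{v_0}$ fixes the vertex $v_0$ of the profinite JSJ tree of $D$, and then assert that $A$ must also stabilize an edge incident at $v_0$. The argument you offer (an irreducible, fixed-point-free action would yield a free pro-$p$ subgroup, contradicting commutativity) only rules out actions of $A$ \emph{without} a global fixed point; it is vacuous here because $A$ already has the fixed point $v_0$. Nothing in Lemma \ref{open subgroup}, Lemma \ref{lem:PD}, Lemma \ref{lem: Abelian acylindrical fixed point} or the $PD^3$ non-splitting results forces a subgroup that fixes a vertex to fix an edge: a hypothetical non-procyclic abelian closed subgroup of $\wh{\pi_1N}$ that is \emph{not} conjugate into a peripheral closure would simply fix $v_0$ and no edge, and the tree machinery (which constrains the whole group $\wh{\pi_1D}$, not its abelian subgroups) cannot exclude it. In other words, the claim ``$A$ fixes an edge of $\wh{T}$'' is essentially a restatement of the proposition, not a step towards it. The actual content is \cite[Theorem 9.3]{wilton_distinguishing_2017}, whose proof uses the internal structure of $\wh{\pi_1N}$ for cusped hyperbolic $N$ and is not a formal consequence of an action on the tree of the double; the paper's proof of the proposition consists precisely of citing that theorem together with \cite[Lemma 4.5]{wilton_distinguishing_2017}.

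Your second step has a smaller but related issue: uniqueness requires knowing that the \emph{closures} of non-conjugate peripheral subgroups (and distinct conjugates of one peripheral closure) intersect trivially in $\wh{\pi_1N}$. Separability of the peripheral subgroups alone does not give this; one needs separability of double cosets $P_1gP_2$, i.e.\ the results of \cite{hamilton_separability_2013}, which is how the malnormality of peripheral closures is established in \cite[Lemma 4.5]{wilton_distinguishing_2017}. You correctly flag this as the hard part and defer to \cite{wilton_distinguishing_2017}, and your closing guess is right -- but once both the existence and the uniqueness steps are outsourced to that paper, the self-contained doubling argument you propose has not actually done any work, and as written its first step does not go through.
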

\begin{proof}
By \cite[Theorem 9.3]{wilton_distinguishing_2017}, $A$ is conjugate into the closure of a peripheral subgroup, and by \cite[Lemma 4.5]{wilton_distinguishing_2017}, the conjugacy class of the cusp subgroup is unique.
\end{proof}

In the classical $PD^n$ setting, one handles manifolds with boundary using the theory of \emph{$PD^n$ pairs} \cite{dicks_groups_1980}.  One of the upshots of this theory is that the fundamental group of an aspherical manifold with aspherical boundary cannot split over a boundary subgroup, relative to the collection of boundary subgroups.  (This can be deduced from the results of \cite{kropholler_splittings_1988}.) No doubt the profinite analogue of this statement can be proved by developing the theory of profinite $PD^n$ pairs.  We take a quicker route here: we prove the result in the cusped hyperbolic case, using Dehn filling.  First, we need to recall the definition of an \emph{acylindrical} splitting.

\begin{definition}\label{defn: Acylindrical}
An action of a group $\Gamma$  on a tree $T$ is \emph{$k$-acylindrical} (for an integer $k$) if,  for every $\gamma\in\Gamma\smallsetminus 1$, the subtree fixed by $\gamma$ is either empty or of diameter at most $k$.  Likewise, an action of a profinite group $G$ on a profinite tree $\wh{T}$ is $k$-acylindrical if the subtree fixed by $\hat{\gamma}$ is either empty or of diameter at most $k$, for every $\hat{\gamma}\in G$.  Such an action is called \emph{acylindrical} if it is $k$-acylindrical for some $k$.
\end{definition}

Acylindricity gives useful control over non-cyclic abelian subgroups, via the following lemma. This was proved in  \cite[Theorem 5.2]{wilton_distinguishing_2017}. (The discrete version of this fact is left as an instructive exercise to the reader.)

\begin{lemma}\label{lem: Abelian acylindrical fixed point}
If $A$ is an abelian, profinite, non-procyclic group, and $A$ acts acylindrically on a profinite tree $T$, then $A$ fixes a vertex.
\end{lemma}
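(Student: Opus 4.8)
The plan is to argue by contradiction, showing that if $A$ did not fix a vertex then it would act \emph{freely} on a minimal invariant subtree, and hence would be procyclic. So suppose $A$ fixes no vertex of $T$. First, by \cite[Lemma 1.5]{zalesskii_profinite_1990} I would pass to a minimal $A$-invariant subtree $D$ and assume the action on $D$ is \emph{irreducible}, i.e.\ has no proper $A$-invariant subtree; this action is still $k$-acylindrical, since the subtrees fixed by individual elements only shrink under restriction, and $D$ is not a single vertex because $A$ fixes no vertex.

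Next I would arrange that the action on $D$ is faithful; this is the only place acylindricity is needed. If some $\kappa\ne1$ lay in the kernel of the action on $D$, then $\kappa$ would fix all of $D$, so by $k$-acylindricity $D$ would have finite diameter. But a profinite tree of finite diameter has a centre invariant under all automorphisms: the function $v\mapsto\sup_{w}\,\mathrm{len}[v,w]$ on vertices is automorphism-invariant and bounded, and the standard median argument shows the set where it is minimal is a single vertex or a pair of adjacent vertices, the latter not interchangeable because a profinite tree has no ``double'' edges. Hence $A$ would fix a vertex, contrary to assumption; so the action on $D$ is faithful.

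Now I would check that $A$ acts freely on $D$. If $\Fix_D(a)\ne\emptyset$ for some $a\ne1$, then, $a$ being central in the abelian group $A$, the set $\Fix_D(a)$ is a non-empty, closed, convex, $A$-invariant subgraph — that is, a non-empty $A$-invariant subtree — and it is proper, since $\Fix_D(a)=D$ would place $a$ in the kernel of the faithful action; this contradicts irreducibility. So $\Fix_D(a)=\emptyset$ for every $a\ne1$, i.e.\ $A$ acts freely on $D$. Then all vertex and edge stabilisers are trivial, so \cite[paragraph 2.7]{zalesskii_subgroups_1988} gives $\mathrm{cd}(A)\le\sup\{0,0+1\}=1$. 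Finally, an abelian profinite group of cohomological dimension $\le1$ is isomorphic to $\prod_{p\in S}\Z_p$ for some set of primes $S$ — each pro-$p$ Sylow subgroup has $\mathrm{cd}_p\le1$, so is free pro-$p$, hence (being abelian) trivial or $\Z_p$ — and such a group is procyclic. This contradicts the hypothesis that $A$ is not procyclic, so $A$ must fix a vertex of $T$.

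I expect this to be short once the ambient theory of profinite trees is in hand; the delicate ingredients are the two cited facts from \cite{zalesskii_profinite_1990,zalesskii_subgroups_1988}, together with the observations that a non-empty fixed-point set of an automorphism is a subtree and that a finite-diameter profinite tree has an automorphism-invariant centre. The conceptual point — mirroring the discrete exercise, where $A$ would act freely on the tree and so be free abelian of rank $\le1$, i.e.\ cyclic — is that an abelian, faithful, irreducible action on a profinite tree is automatically free, after which acylindricity plays no further role and the cohomological-dimension bound finishes the proof.
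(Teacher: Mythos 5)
The paper does not actually prove this lemma: it is quoted from \cite[Theorem 5.2]{wilton_distinguishing_2017}, so there is no in-paper argument to match yours against. Your blind proof is, as far as I can see, correct, and it follows the natural route that the cited proof also takes: pass to the minimal $A$-invariant subtree via \cite[Lemma 1.5]{zalesskii_profinite_1990}, show the action there is free, deduce $\mathrm{cd}(A)\le 1$ from the bound in \cite[paragraph 2.7]{zalesskii_subgroups_1988} (equivalently, a profinite group acting freely on a profinite tree is projective), and conclude procyclicity from the Sylow decomposition of an abelian profinite group. Where you diverge from the toolkit this paper uses elsewhere is in how you rule out nontrivial elements with fixed points: the paper's companion arguments (e.g.\ the proof of Lemma \ref{lem: Major SF case}) invoke \cite[Theorem 2.12]{zalesskii_subgroups_1988} on normal subgroups of irreducible actions, whereas you argue directly that $\Fix_D(a)$ is a nonempty $A$-invariant subtree and appeal to minimality, reducing everything to the auxiliary claim that a profinite tree of finite diameter has an automorphism-invariant vertex. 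That claim is true and your sketch is essentially right, but it is the one step you should either write out carefully or locate in the literature: one needs that finite diameter forces all geodesics to be finite paths so that the eccentricity/median argument applies, and that adjacent central vertices cannot be swapped because a profinite tree contains no pair of edges with opposite orientations between the same two vertices (this follows from injectivity of $\delta$ in the defining exact sequence, which is the precise content behind your ``no double edges'' remark). Note also that the alternative route through \cite[Theorem 2.12]{zalesskii_subgroups_1988} does not let you avoid this point entirely, since one still needs the infinite minimal subtree to have diameter exceeding the acylindricity constant; so your explicit treatment of the finite-diameter case is a genuine ingredient, not a detour.
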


We are now ready to prove the non-splitting result for hyperbolic manifolds with cusps.

\begin{lemma}\label{lem: non-splitting over cusp}
If $N$ is a compact, orientable, hyperbolic 3-manifold with toral boundary and  $\wh{\pi_1N}$  acts on a profinite tree $T$ with each edge stabilizer either procyclic or conjugate into  a peripheral subgroup, then $\wh{\pi_1N}$ fixes  a vertex.
\end{lemma}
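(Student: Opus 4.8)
The plan is to reduce to a situation where Corollary~\ref{cor: No procyclic splitting} applies by killing off the peripheral complications via Dehn filling. Suppose, for contradiction, that $\wh{\pi_1N}$ acts on a profinite tree $T$ without a global fixed point, with each edge stabilizer either procyclic or conjugate into (the closure of) a peripheral subgroup. The first step is to dispose of the genuinely peripheral edge stabilizers. Each peripheral subgroup $P_i\leq\pi_1N$ is $\Z\times\Z$, so its closure $\wh{P_i}$ is $\wh\Z\times\wh\Z$, which is abelian and non-procyclic. If the action of $\wh{\pi_1N}$ on $T$ were acylindrical, Lemma~\ref{lem: Abelian acylindrical fixed point} would force every such $\wh{P_i}$ (and every conjugate) to fix a vertex, so all edge stabilizers conjugate into a peripheral subgroup would fix a vertex; combined with the procyclic edge stabilizers, one then wants to conclude via Corollary~\ref{cor: No procyclic splitting} that $\wh{\pi_1N}$ fixes a vertex. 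There is a subtlety: Corollary~\ref{cor: No procyclic splitting} assumes \emph{procyclic} edge stabilizers, whereas here some edge stabilizers are $\wh\Z\times\wh\Z$. So the actual argument should first pass to a suitable collapse or refinement of the $\wh{\pi_1N}$-tree in which the problematic (peripheral) edges are collapsed, and argue that the minimal invariant subtree the peripheral subgroups stabilize lets us replace the action by one with procyclic edge stabilizers, or alternatively handle the two types of edge directly.

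The cleaner route, and the one I expect the authors to take, is Dehn filling. Choose a hyperbolic Dehn filling $N(\alpha_1,\ldots,\alpha_k)$ along slopes $\alpha_i$ on the boundary tori, avoiding the finitely many exceptional slopes (Thurston), so that the filled manifold $\bar N$ is a closed hyperbolic $3$-manifold. Group-theoretically, $\pi_1\bar N = \pi_1N/\langle\!\langle\gamma_1,\ldots,\gamma_k\rangle\!\rangle$ where $\gamma_i\in P_i$ is the filling slope; passing to profinite completions, $\wh{\pi_1\bar N}$ is a quotient of $\wh{\pi_1N}$ by the (closed) normal subgroup generated by the $\gamma_i$. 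The idea is to choose the slopes so that, in the induced action of $\wh{\pi_1\bar N}$ on the quotient profinite tree $T/\langle\!\langle\gamma_i\rangle\!\rangle$ — or more carefully, the minimal subtree of $T$ modulo the kernel — the edge stabilizers become procyclic. Indeed, an edge stabilizer $G_e$ that was conjugate into $\wh{P_i}\cong\wh\Z^2$ now, after quotienting by a generator $\gamma_i$ of a rank-one direction, has image of cohomological dimension at most $1$, hence procyclic; the genuinely procyclic edge stabilizers either survive as procyclic or collapse. One then applies Corollary~\ref{cor: No procyclic splitting} to the closed hyperbolic manifold $\bar N$ to conclude $\wh{\pi_1\bar N}$ fixes a vertex, and pulls this back.

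The main obstacle is precisely the passage from a fixed point for the filled group $\wh{\pi_1\bar N}$ back to a fixed point for $\wh{\pi_1N}$ acting on the \emph{original} tree $T$. A priori, quotienting a profinite tree by a normal subgroup need not yield a profinite tree, and the induced action can acquire a fixed point without the original action having one; one must argue that the vertex of $T$ fixed by a preimage of $\wh{\pi_1\bar N}$ — or by $\wh{\pi_1N}$ together with the normal closure of the $\gamma_i$ — actually witnesses a global fixed point for $\wh{\pi_1N}$, using that $\wh{\pi_1N}$ is generated by $\langle\!\langle\gamma_i\rangle\!\rangle$ together with elements whose action is understood. Here Proposition~\ref{prop: Cusp uniqueness} is the key input: it pins down the peripheral subgroups intrinsically and shows the $\wh{P_i}$ are genuinely $\wh\Z^2$'s sitting canonically in $\wh{\pi_1N}$, so the chosen $\gamma_i$ have the expected effect and the abelian subgroup $\wh{P_i}$ fixes the relevant vertex by Lemma~\ref{lem: Abelian acylindrical fixed point} once acylindricity of the restricted action is established. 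Establishing that acylindricity (or circumventing it) is the other delicate point: one may instead need to invoke the structure theory of the action — passing to an irreducible subtree as in the proof of Theorem~\ref{thm: Profinite PDn non-splitting}, and using $PD^3$-ness at every prime via Theorem~\ref{thm: PDn completion } and Lemma~\ref{lem:PD} to force a free pro-$p$ subgroup — to directly contradict $cd_p$ bounds, thereby sidestepping the need for Dehn filling altogether. I would first attempt the direct cohomological-dimension argument in the style of Theorem~\ref{thm: Profinite PDn non-splitting}, treating the peripheral edge groups (of $cd=2$) by the fact that $PD^3$ pairs give the needed vanishing, and fall back on Dehn filling if the relative cohomology is not readily available in the profinite setting.
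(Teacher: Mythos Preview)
Your proposal correctly identifies Dehn filling as the route the paper takes, and you correctly pinpoint the obstacle: passing from information about the filled manifold back to the original action on $T$. However, you miss the device the paper uses to resolve it, namely Lemma~\ref{open subgroup}. The argument does not pull back a fixed point at all. Instead, assuming $\wh{\pi_1N}$ has no global fixed point, Lemma~\ref{open subgroup} produces an open normal subgroup---which, after replacing $N$ by the corresponding finite cover, one may take to be $\wh{\pi_1N}$ itself---that is \emph{not generated by its vertex stabilizers}. Now perform hyperbolic Dehn filling along slopes $c_i\in P_i$. The kernel $K=\overline{\llangle c_1,\ldots,c_n\rrangle}$ \emph{is} generated by vertex stabilizers, so by \cite[Prop.~2.5]{zalesskii_subgroups_1988} the quotient $K\backslash T$ is again a profinite tree, on which the filled group $\wh{\pi_1N}(c_1,\ldots,c_n)$ acts with procyclic edge stabilizers. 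Crucially, this quotient action still has no global fixed point: if it fixed some $\bar v$, then $\wh{\pi_1N}=K\cdot\mathrm{Stab}(v)$ for a lift $v$, hence $\wh{\pi_1N}$ would be generated by vertex stabilizers, contradicting the arrangement just made. Since the filled manifold is closed hyperbolic, this contradicts Corollary~\ref{cor: Good PDn groups}.

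Your alternative suggestions---invoking acylindricity via Lemma~\ref{lem: Abelian acylindrical fixed point}, or using Proposition~\ref{prop: Cusp uniqueness} to pin down peripherals---play no role in the paper's proof of this lemma; acylindricity of $T$ is not assumed here. Your instinct that a theory of profinite $PD^n$ pairs would yield a direct argument is explicitly acknowledged by the authors just before the lemma, but they opt for Dehn filling precisely to avoid developing that machinery.
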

\begin{proof}
First, note that if $\wh{\pi_1N}$ acts on  $T$ without fixed points then, by Lemma \ref{open subgroup}, after passing to a proper open subgroup we may assume that $\wh{\pi_1N}$ is not generated by vertex stabilizers. 

Let the family of peripheral subgroups of $\pi_1N$ be $P_1,\ldots,P_n$.  By Thurston's hyperbolic Dehn filling theorem (see, for instance, \cite{agol_bounds_2010,lackenby_maximal_2013} for modern improvements), we may choose slopes $c_i\in P_i$ so that the resulting Dehn filled manifold $N(c_1,\ldots,c_n)$ is a closed, hyperbolic (in particular, aspherical) manifold.  Therefore,
\[
\wh{\pi_1N}/\overline{\llangle c_1,\ldots,c_n\rrangle}\cong\wh{\pi_1N}(c_1,\ldots,c_n)
\]
is a profinite $PD^3$ group.   Since $\overline{\llangle c_1,\ldots,c_n\rrangle}$  is generated by vertex stabilizers, $\wh{\pi_1N}(c_1,\ldots,c_n)$ acts on a profinite tree $\overline{\llangle c_1,\ldots,c_n\rrangle}\backslash T$ (see  \cite[Proposition 2.5]{zalesskii_subgroups_1988} or \cite[Proposition 4.1.1]{ribes_profinite_2017}) and still does not fix a vertex.   The edge stabilizers of the latter action are procyclic.  This contradicts Corollary \ref{cor: Good PDn groups}.
\end{proof}

\section{The JSJ decomposition}

In this section we show that, as well as the Kneser--Milnor decomposition, the JSJ decomposition is also determined by the profinite completion.  In order to avoid ambiguity, we start by stating the form of the JSJ decomposition we consider.  In a nutshell, it is the minimal decomposition along tori such that the complementary pieces are geometric.

\begin{definition}\label{defn: JSJ}
Let $M$ be a closed, orientable, irreducible 3-manifold which is not a torus bundle over the circle. Let $\mathcal{T}\subseteq M$ be an embedded disjoint union of essential tori such that the connected components of $M\smallsetminus\mathcal{T}$ are each geometric -- that is either Seifert fibred or admitting hyperbolic or Sol-geometry.   Such a union $\mathcal{T}$ with the smallest number of connected components is called the \emph{JSJ decomposition of $M$}.
\end{definition}

The existence of the JSJ decomposition follows from the work of Jaco--Shalen--Johannson together with Perelman's proof of the geometrization conjecture; see \cite[\S1.6, \S1.7]{aschenbrenner_manifold_2015} for details.  The tori are unique up to isotopy.   We follow Wilkes' elegant terminology \cite{wilkes_profinite_2018}, and use the term \emph{minor} to denote those components of $M\smallsetminus\mathcal{T}$ that are homeomorphic to the twisted interval bundle over the Klein bottle; the remaining components we call \emph{major}.    If two minor components are adjacent then their union is virtually a torus bundle over a circle, and so admits either Euclidean, Nil- or Sol-geometry, which contradicts the hypothesis that $\mathcal{T}$ was minimal.  Therefore, every edge adjoins at least one major vertex.

The submanifold $\mathcal{T}$ induces a graph-of-spaces decomposition of $M$, and hence a graph-of-groups decomposition of $\pi_1M$ and a profinite graph-of-groups decomposition of $\wh{\pi_1M}$ (see Example \ref{graph group completion}).  The Bass--Serre trees of the latter are denoted by $T_M$ and $\wh{T}_M$, respectively.  Crucially, these trees turn out to be acylindrical, in the sense of Definition \ref{defn: Acylindrical}.

\begin{proposition}\label{prop: JSJ properties}
For $M$ a closed, orientable, irreducible 3-manifold, the JSJ tree $T_M$ and the profinite JSJ tree $\wh{T}_M$ are both 4-acylindrical.
\end{proposition}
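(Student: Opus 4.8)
The plan is to analyze the geodesics fixed pointwise by non-trivial elements in both the discrete tree $T_M$ and the profinite tree $\wh{T}_M$, and to show that the JSJ structure of the vertex and edge groups forces any such geodesic to have length at most $4$. First I would reduce to the profinite case: since $\pi_1 M$ embeds in $\wh{\pi_1 M}$ and $T_M$ embeds $\pi_1 M$-equivariantly in $\wh{T}_M$ (the JSJ tori are separable because they are $\pi_1$-injective and $\pi_1 M$ is subgroup separable by \cite{agol_virtual_2013,wise_structure_2012}; see Example \ref{graph group completion}), acylindricity of $\wh{T}_M$ immediately implies acylindricity of $T_M$. So it suffices to bound the diameter of the fixed subtree of an arbitrary $1\neq\hat\gamma\in\wh{\pi_1 M}$ acting on $\wh{T}_M$.

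Next I would set up the combinatorics. Suppose $\hat\gamma$ fixes a geodesic segment $[v_0, v_k]$ with consecutive edges $e_1,\ldots,e_k$; then $\hat\gamma$ lies in every edge group $\wh{\pi_1}T_{e_i}$ and every vertex group along the path. Each edge group is the profinite completion of a $\Z^2$ coming from a JSJ torus, and each vertex group is the profinite completion of either a major (Seifert fibred or hyperbolic) piece or a minor piece (twisted $I$-bundle over the Klein bottle). The key local fact is that the two edges adjacent to a given interior vertex $w$ correspond to two boundary tori of the piece $M_w$; the profinite completion of a boundary torus is a non-procyclic abelian (indeed $\wh\Z^2$) subgroup of $\wh{\pi_1}M_w$, and I need to know when an element can lie in the intersection of the closures of two \emph{distinct} boundary-torus subgroups. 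For a hyperbolic major vertex, Proposition \ref{prop: Cusp uniqueness} says a non-procyclic abelian subgroup lies in a \emph{unique} peripheral subgroup up to conjugacy, so if $\hat\gamma$ generated such a subgroup the path could not continue past $w$ in both directions — forcing $\hat\gamma$ to be "small" (procyclic) whenever it crosses a hyperbolic vertex. For a Seifert-fibred major vertex, the intersection of two distinct boundary tori inside $\wh{\pi_1}M_w$ is the closure of the fibre subgroup, which is central and procyclic. Thus along any fixed geodesic, as soon as we pass through one major vertex, $\hat\gamma$ must be contained in a procyclic subgroup — for hyperbolic pieces because of uniqueness of cusps, for Seifert pieces because it must be a multiple of the fibre.

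I would then run the length estimate. Since every edge of the JSJ graph is adjacent to at least one major vertex (noted in the text), a fixed geodesic of length $\geq 3$ must pass through a major interior vertex, so $\hat\gamma$ is procyclic and in fact (after the analysis above) $\hat\gamma$ is a central fibre element of that Seifert piece or is peripheral in a unique way. The extreme case is a path of the form minor -- major(Seifert) -- minor, where the two minor end-pieces are twisted $I$-bundles over the Klein bottle: here $\hat\gamma$ can survive a path of length $4$ (two edges into the Seifert piece from each side) because a minor piece's $\wh{\pi_1}$ is virtually $\wh\Z^2$ and contains the relevant slope, but it cannot survive length $5$, since that would require crossing a \emph{second} major vertex, and the fibre directions of two distinct Seifert pieces meeting along a torus are non-commensurable slopes (this is exactly why $\mathcal T$ is minimal and the adjacent pieces are not both Seifert in a compatible way), giving $\hat\gamma = 1$. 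Bounding the minor-side contribution carefully — a minor vertex contributes at most one unit of length before the only boundary torus is exhausted — gives the bound $4$.

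The main obstacle will be the case analysis at major vertices adjacent to two minor vertices, and more generally making rigorous the claim that an element fixing a long geodesic must be simultaneously "peripheral" on both sides at a hyperbolic vertex or "a fibre multiple" at a Seifert vertex, and then that two such constraints at two different major vertices are incompatible. This requires knowing precisely the intersection $\overline{\langle A_1\rangle}\cap\overline{\langle A_2\rangle}$ for two distinct boundary-torus subgroups $A_1, A_2$ inside the profinite completion of a Seifert piece or a hyperbolic piece; for the Seifert case this follows from the explicit structure of $\wh{\pi_1}$ of a Seifert fibred manifold with boundary (a central extension of a Fuchsian group, with the fibre detected cohomologically), and for the hyperbolic case it is exactly Proposition \ref{prop: Cusp uniqueness}. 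Once these local intersection statements are in hand, the global bound of $4$ follows by bookkeeping along the geodesic, using that minor--minor edges do not occur and that the fibre slopes at consecutive Seifert vertices disagree.
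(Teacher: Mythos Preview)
The paper's own proof of this proposition is not an argument at all: it simply records that the discrete $4$-acylindricity was established in \cite{wilton_profinite_2010} and that the profinite $4$-acylindricity (together with properness of the profinite graph of groups) was established in \cite{hamilton_separability_2013}, with a further pointer to \cite[Lemma~4.5]{wilton_distinguishing_2017}. So you are attempting something strictly more ambitious than the paper --- a self-contained proof rather than a citation --- and your outline is broadly the right one (local analysis of how edge groups intersect inside each kind of vertex group, followed by a length count). Two points, however, are genuine gaps rather than merely sketchy.

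First, at a hyperbolic interior vertex you invoke Proposition~\ref{prop: Cusp uniqueness}, but that proposition only controls \emph{non-procyclic} abelian subgroups; the closure of $\langle\hat\gamma\rangle$ is procyclic, so the proposition tells you nothing. What you actually need is that the peripheral family of a cusped hyperbolic $3$-manifold remains \emph{malnormal} in the profinite completion, so that two distinct edge stabilizers at a hyperbolic vertex intersect trivially; this is precisely \cite[Lemma~4.5]{wilton_distinguishing_2017}, the reference the paper points to. With malnormality in hand, a nontrivial $\hat\gamma$ simply cannot have a hyperbolic vertex in the interior of its fixed segment --- stronger than the ``procyclic'' conclusion you draw. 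Similarly, at a major Seifert vertex the assertion that two distinct boundary-torus subgroups intersect exactly in the closure of the fibre requires, in the profinite setting, the malnormality of peripheral subgroups in the profinite completion of the base Fuchsian group; you should say where this comes from.

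Second, your endgame for the bound $4$ asserts that a length-$5$ segment forces $\hat\gamma$ into the fibres of ``two distinct Seifert pieces meeting along a torus'', and then appeals to minimality of $\mathcal{T}$. But a minor vertex in $\wh T_M$ has valence~$2$, and its two neighbours are two \emph{lifts of the same} major vertex in the quotient; so crossing a minor vertex does not take you to a different Seifert piece, and the minimality-of-$\mathcal{T}$ argument as you phrase it does not apply. The correct mechanism here is that the Klein-bottle involution on the boundary $\wh{\Z}^2$ sends the adjacent Seifert fibre slope to a different slope (minimality of $\mathcal{T}$ enters because it rules out the fibre lying in a slope fixed by that involution, i.e.\ rules out the Seifert fibration extending across the minor piece), so the two conjugate fibre subgroups coming from the two lifts still intersect trivially. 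Once you use profinite malnormality at hyperbolic vertices, the genuine fibre-incompatibility at major Seifert vertices, and this involution argument at minor vertices, the bookkeeping gives diameter at most $4$; but as written your proposal does not contain these ingredients.
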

\begin{proof}
In \cite{wilton_profinite_2010} the authors showed that the corresponding decomposition of $\pi_1M$   is 4-acylindrical and fits into Example \ref{graph group completion}. In \cite{hamilton_separability_2013}, the authors showed with Hamilton that the corresponding profinite decomposition of $\wh{\pi_1M}$ is a 4-acylindrical injective graph of profinite groups (see also \cite[Lemma 4.5]{wilton_distinguishing_2017}).
\end{proof}

We are now ready to state our main theorem,

\begin{theorem}\label{thm: main}
If $M,M'$ are closed, orientable, irreducible 3-manifolds and
\[
f:\wh{\pi_1M}\stackrel{\cong}{\to}\wh{\pi_1{M'}}\]
is an isomorphism, then there is an $f$-equivariant isomorphism
\[
\phi:\wh{T}_M\to\wh{T}_{M'}
\]
of the corresponding profinite Bass--Serre trees.  In particular, the underlying graphs of the JSJ decompositions of $M$ and $M'$ are isomorphic, as are the profinite completions of the fundamental groups of the corresponding pieces.
\end{theorem}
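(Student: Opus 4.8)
The plan is to build the tree isomorphism $\phi:\wh{T}_M\to\wh{T}_{M'}$ by showing that each side's vertex and edge stabilizers act on the other side's tree with a fixed point, so that the two profinite graph-of-groups decompositions are forced to refine each other, and hence coincide. Concretely, first I would transport the action of $\wh{\pi_1M}$ on $\wh{T}_{M'}$ back along $f$ to get an action of $\wh{\pi_1M}$ on $\wh{T}_{M'}$ with the same (conjugacy classes of) stabilizers as the JSJ decomposition of $M'$; by Proposition \ref{prop: JSJ properties} this action is $4$-acylindrical, and its edge stabilizers are closures of $\Z^2$ subgroups. I then examine how each vertex group $\wh{\pi_1M}_v$ of the JSJ decomposition of $M$ sits inside this action.

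The key case analysis is on the geometry of the piece $M_v$. If $M_v$ is hyperbolic, then $\wh{\pi_1M}_v$ acts on $\wh{T}_{M'}$ with edge stabilizers that are either procyclic (if the $\Z^2$ of $M'$ meets $\wh{\pi_1M}_v$ in a procyclic group) or, being a non-procyclic abelian subgroup of $\wh{\pi_1M}_v$, conjugate into the closure of a peripheral subgroup by Proposition \ref{prop: Cusp uniqueness}; in either case Lemma \ref{lem: non-splitting over cusp} gives a global fixed point, so $\wh{\pi_1M}_v$ is conjugate into some vertex group of $M'$. If $M_v$ is major Seifert fibred, I would use the fiber subgroup: it is a procyclic normal subgroup whose presence means the edge stabilizers intersected with $\wh{\pi_1M}_v$ are ``large'', and a centralizer/acylindricity argument (Lemma \ref{lem: Abelian acylindrical fixed point} applied to an abelian subgroup containing the fiber) again forces a fixed point. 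The minor (Klein-bottle-bundle) vertices are handled separately — they are virtually $\Z^2$, hence nilpotent, and Lemma \ref{lem: Abelian acylindrical fixed point} (after passing to the abelian subgroup of index two, which is non-procyclic) forces them to fix a vertex too; since every edge of the JSJ graph adjoins a major vertex, the major vertices already determine the combinatorics.

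Once every vertex group of $M$ is conjugate into a vertex group of $M'$ and, by the symmetric argument, every vertex group of $M'$ is conjugate into a vertex group of $M$, I would invoke the fact that profinite (closed) subgroups cannot be properly conjugate into themselves to deduce that the correspondence of vertex groups is a bijection respecting conjugacy, with matching profinite completions. To upgrade this to an $f$-equivariant graph isomorphism $\phi$, I would check that edges correspond as well: an edge stabilizer of $\wh{T}_M$ is the intersection of two adjacent vertex stabilizers (up to the closure of $\Z^2$), and acylindricity pins down the unique edge between two prescribed adjacent vertex groups, so the bijection on vertices extends uniquely to edges and commutes with the $d_i$. Equivariance is then automatic from the construction via $f$.

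The main obstacle I expect is the Seifert-fibred vertex case: unlike the hyperbolic case, there is no clean ``peripheral subgroups are the only non-procyclic abelians'' statement, and one must rule out a Seifert piece splitting ``horizontally'' across an edge of $M'$ in a way that is invisible to the edge stabilizer's procyclicity. The fiber subgroup is the right tool — it is procyclic, central in the vertex group, and its normal closure behaves well — but making the fixed-point argument work uniformly (in particular distinguishing when the fiber is or is not conjugate into an edge group of $M'$, and handling base orbifolds with boundary) is the delicate part. I would also need to be careful that the profinite JSJ decomposition really is the ``minimal'' one in the relevant sense, so that no further collapsing is possible; this is where properness and $4$-acylindricity from Proposition \ref{prop: JSJ properties}, together with Corollary \ref{cor: No procyclic splitting} ruling out spurious procyclic splittings of the ambient group, do the essential work.
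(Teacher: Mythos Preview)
Your overall strategy matches the paper's: establish that each vertex group $\wh{\pi_1M}_v$ fixes a (unique) vertex of $\wh{T}_{M'}$, do the same in the other direction, and assemble an equivariant map. Your case analysis for the fixed-point step (hyperbolic via Proposition~\ref{prop: Cusp uniqueness} and Lemma~\ref{lem: non-splitting over cusp}; major Seifert fibred via the fibre subgroup and acylindricity; minor via the index-two abelian subgroup) is essentially what the paper does in Lemmas~\ref{lem: Hyperbolic fixed point}--\ref{lem: Minor SF case}.

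The genuine gap is in your passage from vertices to edges. You write that ``acylindricity pins down the unique edge between two prescribed adjacent vertex groups,'' but you have not shown that the images $v_1,v_2\in\wh{T}_{M'}$ of the endpoints $u_1,u_2$ of an edge $e$ of $\wh{T}_M$ are in fact adjacent. All you know is that the edge stabilizer $G_e\cong\wh{\Z}^2$ fixes the geodesic $[v_1,v_2]$; $4$-acylindricity only bounds its length by $4$, not by $1$. The paper resolves this with an analysis you do not mention: it classifies the possible fixed subtrees of a $\wh{\Z}^2$ subgroup (Lemma~\ref{lem: Edge characterization}) and shows they have diameter at most $2$, with diameter $2$ occurring exactly when the middle vertex is minor. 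Ruling out the diameter-$2$ case in the construction of $\phi$ then requires comparing normalizers and centralizers of $\wh{\Z}^2$ subgroups inside hyperbolic and major Seifert fibred pieces (Lemmas~\ref{lem: Normalizer hyperbolic case} and~\ref{lem: Normalizer major SF case}); one shows that if $[v_1,v_2]$ had a minor vertex $w$ in the middle, then $G_w$ would normalize $G_e$, forcing $G_w$ into an abelian edge stabilizer, a contradiction. Without this, your map on vertices need not extend to a graph morphism.

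A secondary point: invoking ``profinite subgroups cannot be properly conjugate into themselves'' gives a bijection on conjugacy classes of vertex groups, but this alone does not produce an equivariant map of trees. The paper instead uses \emph{uniqueness} of the fixed vertex for each vertex stabilizer (the ``unique'' in Lemmas~\ref{lem: Hyperbolic fixed point}--\ref{lem: Minor SF case}), which both defines $\phi$ on vertices and immediately forces $\psi\circ\phi$ to be the identity. You should argue uniqueness directly (it follows since vertex stabilizers are non-abelian while edge stabilizers are abelian) rather than routing through the conjugacy argument.
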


Consider a vertex space $N$ of $M$.  The next three lemmas show that $\wh{\pi_1N}$ must act with a fixed point  on $\wh{T}_{M'}$.  We start with the hyperbolic case.

\begin{lemma}\label{lem: Hyperbolic fixed point}
Consider $N$ a compact, hyperbolic 3-manifold with (possibly empty) toral boundary. If $\wh{\pi_1N}$ acts acylindrically on a profinite tree $\wh{S}$ with abelian edge stabilizers then $\wh{\pi_1N}$ fixes a unique vertex.
\end{lemma}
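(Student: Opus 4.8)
The plan is to dispose of uniqueness first and then produce a fixed vertex, treating the closed case and the case with non-empty boundary separately; throughout I would take $N$ to be orientable, as in the intended application (passing to the orientation double cover otherwise). For uniqueness, suppose $\wh{\pi_1N}$ fixed two distinct vertices $v\neq v'$ of $\wh S$. Then it would fix every point of the geodesic $[v,v']$, which contains an edge $e$, so $\wh{\pi_1N}$ would coincide with its own edge stabilizer $\wh S_e$ and hence be abelian. But $\pi_1N$ is residually finite and non-abelian, so it embeds densely in $\wh{\pi_1N}$, forcing the latter to be non-abelian --- a contradiction. So it remains to prove that a fixed vertex exists.

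The key ingredient for existence is Proposition \ref{prop: Cusp uniqueness}: each edge stabilizer of the action is a closed abelian subgroup of $\wh{\pi_1N}$, hence is either procyclic or conjugate into the closure of a peripheral subgroup of $\pi_1N$. Suppose first that $N$ is closed. Then $\pi_1N$ has no peripheral subgroups, so every edge stabilizer is forced to be procyclic. Since $N$, being closed and hyperbolic, is aspherical (so $\pi_1N$ is $PD^3$, and it is good by Theorem \ref{thm: Goodness}) and hence irreducible, Corollary \ref{cor: No procyclic splitting} (equivalently, Corollary \ref{cor: Good PDn groups}) shows that $\wh{\pi_1N}$ fixes a vertex.

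Now suppose instead that $N$ has non-empty toral boundary, so that its interior is a finite-volume cusped hyperbolic $3$-manifold with the same fundamental group. The same consequence of Proposition \ref{prop: Cusp uniqueness} tells us that every edge stabilizer is either procyclic or conjugate into the closure of a peripheral subgroup --- which is precisely the hypothesis of Lemma \ref{lem: non-splitting over cusp} --- and that lemma then produces a fixed vertex. Acylindricity of the action enters only to make the Dehn-filling argument of Lemma \ref{lem: non-splitting over cusp} go through: by Lemma \ref{lem: Abelian acylindrical fixed point}, the non-procyclic abelian closures of the peripheral subgroups fix vertices of $\wh S$, so the slopes that the filling kills are carried by vertex stabilizers.

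I do not anticipate a substantive obstacle: the proof is essentially a case-check governed by Proposition \ref{prop: Cusp uniqueness}, and the only matters requiring care are being consistent that a ``peripheral subgroup'' of $\wh{\pi_1N}$ means the closure of a peripheral subgroup of $\pi_1N$ (so that Proposition \ref{prop: Cusp uniqueness} and Lemma \ref{lem: non-splitting over cusp} dovetail); handling the orientation double cover in the non-orientable case (which changes none of the relevant properties, and where a fixed vertex for an index-two subgroup descends because finite groups acting on profinite trees have fixed points); and noticing that the closed case is genuinely subsumed by Proposition \ref{prop: Cusp uniqueness} --- with no peripheral subgroups around its conclusion is vacuous, which forces every closed abelian subgroup of $\wh{\pi_1N}$, in particular every edge stabilizer, to be procyclic.
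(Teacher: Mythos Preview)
Your proof is correct and follows the paper's approach closely: uniqueness from the fact that fixing two distinct vertices forces fixing an edge (the paper obtains this by citing \cite[Corollary~2.9]{zalesskii_subgroups_1988}), the closed case via procyclic edge stabilizers and Corollary~\ref{cor: No procyclic splitting} (the paper cites \cite[Theorem~D]{wilton_distinguishing_2017} directly rather than deducing this from Proposition~\ref{prop: Cusp uniqueness}, but the content is the same), and the cusped case via Proposition~\ref{prop: Cusp uniqueness} followed by Lemma~\ref{lem: non-splitting over cusp}. Note that Lemma~\ref{lem: non-splitting over cusp} as stated does not assume acylindricity, and the paper's proof of the present lemma does not invoke acylindricity for existence at all, so your remark about it is an aside rather than a needed step.
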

\begin{proof}
If $N$ is closed then every abelian subgroup of $\wh{\pi_1N}$ is procyclic \cite[Theorem D]{wilton_distinguishing_2017} and the result follows from Corollary \ref{cor: No procyclic splitting}.

Suppose therefore that $N$ has non-empty toroidal boundary.   By Proposition \ref{prop: Cusp uniqueness} every edge stabilizer is either procyclic or conjugate into a peripheral subgroup, and therefore $\wh{\pi_1N}$ fixes a vertex by Lemma \ref{lem: non-splitting over cusp}.

Uniqueness follows from \cite[Corollary 2.9]{zalesskii_subgroups_1988} or  \cite[Corollary 4.1.6]{ribes_profinite_2017}, since $\wh{\pi_1N}$ is non-abelian and edge stabilizers are abelian.
\end{proof}

We move on to the major Seifert fibred case.

\begin{lemma}\label{lem: Major SF case}
Consider $N$ a compact, major Seifert fibred 3-manifold with (possibly empty) toral boundary. If $\wh{\pi_1N}$ acts acylindrically on a profinite tree $\wh{S}$ with abelian edge groups then $\wh{\pi_1N}$ fixes a unique vertex.
\end{lemma}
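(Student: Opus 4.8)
The plan is to reduce to the closed case handled in \cite{wilton_distinguishing_2017} by Dehn filling, exactly as in the proof of Lemma \ref{lem: non-splitting over cusp}, but now keeping track of the Seifert structure. First I would dispose of the case where $N$ is closed: a closed, major Seifert fibred manifold has a normal procyclic subgroup $Z$ (generated by a regular fibre) with $\wh{\pi_1N}/\overline{Z}$ a profinite Fuchsian group, and one applies Lemma \ref{lem: Abelian acylindrical fixed point} — the centre $\overline{Z}$ fixes a vertex since $\wh{\pi_1N}$ is non-procyclic; then either $\overline{Z}$ acts trivially on $\wh S$ and we get an induced acylindrical action of the Fuchsian quotient, or $\overline{Z}$ acts non-trivially and acylindricity forces its fixed tree to be a single vertex fixed by the whole normaliser, i.e.\ all of $\wh{\pi_1N}$.

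For the case with non-empty toral boundary, let $P_1,\dots,P_n$ be the peripheral subgroups. By Lemma \ref{open subgroup}, if $\wh{\pi_1N}$ fixes no vertex of $\wh S$ then after passing to a proper open subgroup $U$ we may assume $\wh{\pi_1N}$ is not generated by its vertex stabilisers; here I would note that an open subgroup of a major Seifert fibred group is again (the profinite completion of) a major Seifert fibred manifold group, so no generality is lost. The key point is then to choose the Dehn filling slopes $c_i \in P_i$ compatibly with the Seifert fibration: one fills along slopes transverse to the fibre on each boundary torus, so that the filled manifold $N(c_1,\dots,c_n)$ is again a closed Seifert fibred manifold (and still major, by choosing the slopes generically). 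Then
\[
\wh{\pi_1N}/\overline{\llangle c_1,\dots,c_n\rrangle}\cong\wh{\pi_1 N(c_1,\dots,c_n)},
\]
and since $\overline{\llangle c_1,\dots,c_n\rrangle}$ is generated by vertex stabilisers (each $c_i$, being an abelian edge-ish element, fixes a vertex), the quotient acts on $\overline{\llangle c_1,\dots,c_n\rrangle}\backslash \wh S$ (by \cite[Prop.~2.5]{zalesskii_subgroups_1988}) without a global fixed point, and this induced action is still acylindrical with abelian edge stabilisers. This contradicts the closed case established above.

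Uniqueness of the fixed vertex follows from \cite[Corollary 2.9]{zalesskii_subgroups_1988}: $\wh{\pi_1N}$ is non-abelian while the edge stabilisers are abelian, so it cannot fix two adjacent vertices, hence not two vertices at all. The main obstacle I anticipate is the Dehn filling step: one must verify that slopes can be chosen so that the filled manifold remains Seifert fibred \emph{and} major (so that its group is a profinite $PD$-type / Fuchsian-by-procyclic group to which the closed argument applies), and simultaneously so that the $c_i$ genuinely lie in vertex stabilisers of the $\wh{\pi_1N}$-action on $\wh S$ — this last point uses that $c_i \in P_i$ is abelian, hence (the cyclic group it generates, and indeed $\overline{P_i}$) acts on $\wh S$ acylindrically, and a procyclic subgroup of an acylindrical action either fixes a vertex or acts like a translation; the hypothesis that edge stabilisers are abelian together with acylindricity should rule out the latter for the particular elements arising, or else one argues that $\overline{P_i}$ itself fixes a vertex by Lemma \ref{lem: Abelian acylindrical fixed point} after noting $\overline{P_i}\cong\wh{\Z}^2$ is non-procyclic.
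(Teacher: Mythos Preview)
Your approach via Dehn filling is unnecessarily elaborate and contains a genuine gap; the paper's proof is short and works uniformly, without distinguishing the closed and bounded cases. The argument is this: since $N$ is a major Seifert fibred space, $\wh{\pi_1N}$ has a maximal procyclic normal subgroup $C$ (the closure of the fibre subgroup), and $C$ sits inside some $\wh{\Z}^2$, so $C$ fixes a vertex by Lemma~\ref{lem: Abelian acylindrical fixed point}. If $\wh{\pi_1N}$ fixed no vertex then by \cite[Lemma~1.5]{zalesskii_profinite_1990} the minimal $\wh{\pi_1N}$-invariant subtree $\wh D$ would be infinite, and by \cite[Theorem~2.12]{zalesskii_subgroups_1988} the normal subgroup $C$ would act trivially on $\wh D$; but then a nontrivial element of $C$ fixes arbitrarily long geodesics, contradicting acylindricity. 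Uniqueness is exactly as you say.

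The specific gap in your argument is the assertion that the induced action of $\wh{\pi_1 N(c_1,\dots,c_n)}$ on the quotient tree $\overline{\llangle c_1,\dots,c_n\rrangle}\backslash\wh S$ is still acylindrical. In the hyperbolic case (Lemma~\ref{lem: non-splitting over cusp}) this is never needed: there the quotient edge stabilizers become procyclic and one invokes Corollary~\ref{cor: Good PDn groups}, which requires no acylindricity hypothesis. In your Seifert reduction, by contrast, you must feed the quotient action back into your closed-case argument, and that argument does rely on acylindricity --- which is not in general preserved under passing to $K\backslash T$ for a normal subgroup $K$. Your closed-case argument is also not quite complete as stated: acylindricity tells you that the fixed subtree of $\overline Z$ has bounded diameter, not that it is a single vertex, so it does not immediately follow that the normalizer $\wh{\pi_1N}$ fixes a point of it. The two cited results from \cite{zalesskii_profinite_1990} and \cite{zalesskii_subgroups_1988} are precisely what close this circle, and once you have them the case distinction and the Dehn filling become superfluous.
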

\begin{proof}
The  subgroups of $\wh{\pi_1N}$  which are  isomorphic to $\widehat{\mathbb{Z}}^2$  each fix a vertex by Lemma \ref{lem: Abelian acylindrical fixed point}. Thus the maximal  procyclic normal subgroup $C$ of $\wh{\pi_1N}$ fixes a vertex.

Suppose on the contrary $\wh{\pi_1N}$ does not fix a vertex. By \cite[Lemma 1.5]{zalesskii_profinite_1990} or \cite[Proposition 2.4.12]{ribes_profinite_2017}, there exists a unique  minimal $\wh{\pi_1N}$-invariant subtree $\wh{D}$ of $\wh{S}$ , which is infinite. Now by \cite[Theorem 2.12]{zalesskii_subgroups_1988} or \cite[Proposition 4.2.2]{ribes_profinite_2017}, $C$ acts trivially on $\wh{D}$, which  contradicts the acylindricity of the action.

Uniqueness again follows from \cite[Corollary 2.9]{zalesskii_subgroups_1988} or \cite[Corollary 4.1.6]{ribes_profinite_2017}.
\end{proof}

The case of minor Seifert-fibred vertex follows immediately from \cite[Theorem 5.2]{wilton_distinguishing_2017} and \cite[Corollary 2.9]{zalesskii_subgroups_1988} or \cite[Corollary 4.1.6]{ribes_profinite_2017}.

\begin{lemma}\label{lem: Minor SF case}
Consider $N$ a minor Seifert fibred 3-manifold. If $\wh{\pi_1N}$ acts acylindrically on a profinite tree $\wh{S}$ with abelian edge groups then $\wh{\pi_1N}$ fixes a unique vertex.
\end{lemma}

We next classify the fixed point sets of $\wh{\Z}^2$ subgroups of $\wh{\pi_1M}$.  First, we need an analysis of their normalizers in vertex stabilizers.  We start with the hyperbolic case, in which case normalizers coincide with centralizers.

\begin{lemma}\label{lem: Normalizer hyperbolic case}
Let $N$ be a compact, orientable, hyperbolic 3-manifold with toral boundary, and $H\cong\wh{\Z}^2$ a subgroup of $\wh{\pi_1N}$. Then $N_{\wh{\pi_1N}}(H)=C_{\wh{\pi_1N}}(H)$.
\end{lemma}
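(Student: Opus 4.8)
The goal is to show that for a compact, orientable, hyperbolic $3$-manifold $N$ with toral boundary, and $H\cong\wh{\Z}^2$ a subgroup of $\wh{\pi_1N}$, the normalizer $N_{\wh{\pi_1N}}(H)$ equals the centralizer $C_{\wh{\pi_1N}}(H)$. Since the centralizer is always contained in the normalizer, the content is that conjugation by any element of the normalizer acts trivially on $H$.

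The plan is to invoke Proposition \ref{prop: Cusp uniqueness}: since $H\cong\wh{\Z}^2$ is not procyclic, it lies in the closure $\overline{P}$ of a peripheral subgroup $P\cong\Z^2$ of $\pi_1N$, and this peripheral subgroup is unique up to conjugacy. Let $g\in N_{\wh{\pi_1N}}(H)$. Then $gHg^{-1}=H\leq\overline{P}$, but also $gHg^{-1}\leq g\overline{P}g^{-1}=\overline{gPg^{-1}}$, which is the closure of another peripheral subgroup. So $H$ is a non-procyclic abelian subgroup contained in the closures of both $P$ and $gPg^{-1}$; by the uniqueness clause of Proposition \ref{prop: Cusp uniqueness} these two peripheral subgroups are conjugate, and in fact, chasing the uniqueness statement carefully (the conjugacy class of the cusp is determined by $H$, cf.\ \cite[Lemma 4.5]{wilton_distinguishing_2017}), one gets that $g$ normalizes $\overline{P}$ itself.

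Thus it suffices to understand $N_{\wh{\pi_1N}}(\overline{P})$ and to show it acts trivially on $H$. The key point is that a peripheral torus subgroup is \emph{malnormal up to finite index} in $\pi_1N$ in the appropriate sense, and more precisely the normalizer of $\overline{P}$ in $\wh{\pi_1N}$ is $\overline{P}$ itself: this follows because $N$ is hyperbolic, so $P$ is self-normalizing in $\pi_1N$ with the conjugates of $P$ intersecting trivially, and this malnormality is inherited by the profinite completion (one can use that $\pi_1N$ is cusped-hyperbolic, hence conjugacy separable on cusps, or argue via the action on the profinite tree dual to a peripheral splitting). Once $N_{\wh{\pi_1N}}(\overline{P})=\overline{P}$, any $g$ normalizing $H$ lies in $\overline{P}$, which is abelian, so $g$ centralizes $H$ and we are done.

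The main obstacle I anticipate is making the second step rigorous: deducing from the uniqueness-up-to-conjugacy clause that $g$ actually normalizes $\overline{P}$ (rather than merely conjugating $P$ to some other peripheral subgroup), and then pinning down $N_{\wh{\pi_1N}}(\overline{P})$. The cleanest route is probably: let $T$ be the profinite tree dual to the graph-of-groups splitting of $\pi_1N$ along its peripheral tori (collapsing $N$ to a point with the cusps as edge groups won't quite work since $N$ is a single piece, so instead use that $\overline{P}$ is the unique maximal $\wh{\Z}^2$ up to conjugacy and its normalizer permutes its conjugates, forcing $g$ into the stabilizer). Alternatively, and more directly, combine the uniqueness in Proposition \ref{prop: Cusp uniqueness} applied to $H$ with the observation that $\overline{P}$ is the unique closed non-procyclic abelian subgroup of $\wh{\pi_1N}$ containing $H$ and contained in the closure of \emph{a} peripheral subgroup; since $g\overline{P}g^{-1}$ has the same property, $g\overline{P}g^{-1}=\overline{P}$. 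Hence $g\in N_{\wh{\pi_1N}}(\overline{P})$, and as $\overline{P}$ is a maximal abelian (indeed self-normalizing, since $N$ is hyperbolic and this passes to the completion via \cite[Lemma 4.5]{wilton_distinguishing_2017}) subgroup, $g\in\overline{P}$ and centralizes $H$.
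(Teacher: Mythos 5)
Your proposal is correct and follows essentially the same route as the paper: conjugate $H$ into the closure $\overline{P}$ of a peripheral subgroup via Proposition \ref{prop: Cusp uniqueness} (i.e.\ \cite[Theorem 9.3]{wilton_distinguishing_2017}), and then use the malnormality of $\overline{P}$ in $\wh{\pi_1N}$, which is exactly \cite[Lemma 4.5]{wilton_distinguishing_2017}. Note that malnormality alone finishes the argument at once --- if $g$ normalizes $H$ then $1\neq H\le \overline{P}\cap g\overline{P}g^{-1}$ forces $g\in\overline{P}$, which is abelian and contains $H$ --- so your intermediate uniqueness detour is not needed (and your phrase ``the unique closed non-procyclic abelian subgroup of $\wh{\pi_1N}$ containing $H$ and contained in the closure of a peripheral subgroup'' is not literally correct, since $H$ itself has that property; what you really want, and what malnormality gives, is that the conjugate of a peripheral closure containing $H$ is unique).
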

\begin{proof}
By \cite[Theorem 9.3]{wilton_distinguishing_2017}, $H$ is conjugate into the closure of a cusp subgroup, and by \cite[Lemma 4.5]{wilton_distinguishing_2017}, that cusp subgroup is malnormal. The result follows.
\end{proof}

We next treat the case of a major Seifert fibred manifold.

\begin{lemma}\label{lem: Normalizer major SF case}
Let $N$ be a compact, orientable, major Seifert-fibred 3-manifold with toral boundary, and $H\cong\wh{\Z}^2$ a subgroup of $\wh{\pi_1N}$ conjugate to the closure of the fundamental group of a boundary component. Then $N_{\wh{\pi_1N}}(H)=C_{\wh{\pi_1N}}(H)$.
\end{lemma}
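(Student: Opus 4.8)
The plan is to push the problem down to the fundamental group of the base $2$-orbifold and analyse it there via an action on a profinite tree. Up to replacing $H$ by a conjugate --- which replaces both sides of the asserted identity by their conjugates --- we may assume $H=\overline{\pi_1 T_0}$ for a fixed boundary torus $T_0$ of $N$. Being a major Seifert fibred piece, $N$ has hyperbolic base orbifold $\mathcal{O}$, so the regular fibre spans a central infinite cyclic subgroup $\langle h\rangle\le\pi_1N$; write $C=\overline{\langle h\rangle}$ for its (central) closure $\cong\wh{\Z}$ in $\wh{\pi_1N}$, and let $\pi\colon\wh{\pi_1N}\twoheadrightarrow\wh Q$ be the quotient by $C$, where $Q=\pi_1^{\mathrm{orb}}(\mathcal{O})$. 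I shall use the standard fact (from the structure theory of profinite completions of Seifert fibred groups, the fibre subgroup and its powers being separable) that $1\to C\to\wh{\pi_1N}\xrightarrow{\pi}\wh Q\to1$ is exact. Then $C\le H$, and $\bar H:=\pi(H)$ is the closure $\overline{\langle c\rangle}\cong\wh{\Z}$ of the boundary subgroup $\langle c\rangle\le Q$ carried by $T_0$; fix a lift $b\in H$ of $c$.

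Centrality of $C$ makes the preimage $\pi^{-1}\!\bigl(C_{\wh Q}(c)\bigr)$ abelian: it is topologically generated by the central group $C$ together with a single lift of a topological generator of the procyclic group $C_{\wh Q}(c)$ (procyclicity of $C_{\wh Q}(c)$ will emerge from the tree argument below). Since $\bar H=\overline{\langle c\rangle}\le C_{\wh Q}(c)$, this abelian group contains $H$; hence it suffices to show $\pi\bigl(N_{\wh{\pi_1N}}(H)\bigr)\le C_{\wh Q}(c)$, for then any $g\in N_{\wh{\pi_1N}}(H)$ lies in the abelian group $\pi^{-1}\!\bigl(C_{\wh Q}(c)\bigr)\supseteq H$ and so centralises $H$ (the inclusion $C_{\wh{\pi_1N}}(H)\le N_{\wh{\pi_1N}}(H)$ being automatic). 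Now for $g\in N_{\wh{\pi_1N}}(H)$, conjugation by $g$ fixes the central subgroup $C\le H$ pointwise, so it descends to an automorphism of $\bar H$, i.e.\ $\pi(g)$ normalises $\overline{\langle c\rangle}$ in $\wh Q$. The problem is thus reduced to proving the one-dimension-lower statement
\[
N_{\wh Q}\bigl(\overline{\langle c\rangle}\bigr)=C_{\wh Q}(c).
\]

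To prove this, let $\wh Q$ act on the profinite Bass--Serre tree $\Theta$ of the decomposition of the virtually free group $Q$ as a free product of cyclic groups; its vertex stabilisers are finite and its edge stabilisers trivial. As $c$ has infinite order it fixes no vertex, so by \cite[Lemma 1.5]{zalesskii_profinite_1990} the procyclic group $\overline{\langle c\rangle}$ has a unique minimal invariant subtree $D\subseteq\Theta$, and by uniqueness $N_{\wh Q}(\overline{\langle c\rangle})$ stabilises $D$. Since $D$ is infinite with trivial edge stabilisers, $\mathrm{Stab}_{\wh Q}(D)$ acts faithfully on $D$ and hence embeds in the pro-(infinite dihedral) group $\mathrm{Aut}(D)\cong\wh{\Z}\rtimes\Z/2$; as the centraliser of a nontrivial translation in $\wh{\Z}\rtimes\Z/2$ is the whole translation subgroup $\wh{\Z}$, it follows that $C_{\wh Q}(c)$ is procyclic, $\cong\wh{\Z}$ (as claimed above), and that either $N_{\wh Q}(\overline{\langle c\rangle})$ consists of translations, whence it equals $C_{\wh Q}(c)$, or else it contains an element $g$ with $gcg^{-1}=c^{-1}$. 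The latter cannot happen: such a $g$ would make $c$ conjugate to $c^{-1}$ in $\wh Q$, hence in $Q$ (finitely generated virtually free groups are conjugacy separable), whereas a boundary curve of a hyperbolic base orbifold is never conjugate to its inverse --- and it is exactly here that ``major'' is used, since it guarantees that the base orbifold is hyperbolic (the excluded pieces being the twisted interval bundles over the Klein bottle). This yields $N_{\wh Q}(\overline{\langle c\rangle})=C_{\wh Q}(c)$, completing the argument.

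The hard part is the profinite Bass--Serre input in the last step: identifying the minimal invariant subtree $D$ and its automorphism group as pro-(infinite dihedral), and thereby extracting both the procyclicity of $C_{\wh Q}(c)$ and the fact that an element normalising $\overline{\langle c\rangle}$ acts on it through $\{\pm1\}$. I expect these to follow from the treatment of profinite completions of Seifert fibred and virtually free groups in \cite{wilton_distinguishing_2017} and the theory of \cite{ribes_profinite_2010}, in which case the above collapses to a short paragraph.
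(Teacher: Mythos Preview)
Your strategy --- reduce to the base-orbifold group $Q=\pi_1^{\mathrm{orb}}\mathcal{O}$ and analyse the normalizer of the peripheral procyclic subgroup $\overline{\langle c\rangle}$ via the action of $\wh Q$ on a profinite Bass--Serre tree --- is genuinely different from the paper's. The paper makes the same reduction to $Q$ but then appeals to hereditary conjugacy separability of Fuchsian groups (\cite{fine_conjugacy_1990}, \cite{minasyan_hereditary_2012}, \cite{chagas_hereditary_2013}) to identify the profinite centralizer and normalizer of $\overline{\langle c\rangle}$ with the closures of the discrete ones, whereupon the result follows from the elementary discrete equality $N_{\pi_1 O}(\langle c\rangle)=C_{\pi_1 O}(c)$. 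Your tree argument is more self-contained in spirit and avoids the heavier separability machinery, but the ``hard part'' you flag --- that the minimal $\overline{\langle c\rangle}$-invariant subtree $D$ is a profinite line with automorphism group $\wh{\Z}\rtimes\Z/2$ --- is not a routine fact in the profinite setting, and the paper's route sidesteps it entirely.

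There is also a genuine error in your reduction step. You assert that the regular fibre generates a \emph{central} subgroup of $\pi_1N$; this is false precisely when the base orbifold $\mathcal{O}$ is non-orientable, which can happen for orientable major pieces, and the paper explicitly flags $Z$ as ``not necessarily central''. Your claim that $\pi^{-1}(C_{\wh Q}(c))$ is abelian then breaks down: a lift of an element of $C_{\wh Q}(c)$ may conjugate $h$ to $h^{-1}$. The repair is to strengthen the conclusion of your tree argument from ``$C_{\wh Q}(c)$ is procyclic'' to ``$C_{\wh Q}(c)=\overline{\langle c\rangle}$'', so that the preimage is exactly $H$, which is abelian regardless of centrality. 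That in turn needs $c$ to have no proper roots in $\wh Q$, which brings back some separability input of the same flavour you were trying to avoid.
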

\begin{proof}
The fundamental group $\pi_1N$ is torsion-free of the form
\[
1\to Z\to\pi_1N\to\pi_1O\to 1
\]
where $\pi_1O$ is a Fuchsian group and $Z$ is infinite cyclic (and not necessarily central).  Since Seifert-fibred 3-manifold groups are LERF \cite{scott_subgroups_1978,scott_correction_1985} we have a corresponding short exact sequence of profinite completions.
\[
1\to\wh Z\to\wh{\pi_1N}\overset{f}{\to}\wh{\pi_1O}\to 1
\]
Then $C_{\wh{\pi_1N}}(H)$ contains $\widehat Z$ and centralizes it. So $f(C_{\wh{\pi_1N}}(H))=C_{\wh{\pi_1O}}(f(H))$ and $f(N_{\wh{\pi_1N}}(H))=N_{\wh{\pi_1O}}(f(H))$.   Hence it suffices to  show that $C_{\wh{\pi_1O}}(f(H))=N_{\wh{\pi_1O}}(f(H))$. We may assume that $H$ is the closure of the fundamental group of a boundary component; then $f(H)$ is the closure of a peripheral infinite-cyclic subgroup $C$ of $\pi_1O$. Since Fuchsian groups are conjugacy separable \cite{fine_conjugacy_1990} we deduce that every-finite index subgroup of $\pi_1O$ is conjugacy separable. Then $\overline{C_{\pi_1O}(C)}=C_{\wh{\pi_1O}}(f(H))$ by  \cite[Corollary 12.3]{minasyan_hereditary_2012}, and  $\overline{N_{\pi_1O}(C)}=N_{\wh{\pi_1O}}(f(H))$ by \cite[Lemma 2.3 combined with Theorem 2.14]{chagas_hereditary_2013}. But $N_{\pi_1O}(C)=C_{\pi_1O}(C)$, so $C_{\wh{\pi_1O}}(f(H))=N_{\wh{\pi_1O}}(f(H))$ as required.
\end{proof}

Next, we classify the possible fixed subtrees for $\wh{\Z}^2$ subgroups of $\wh{\pi_1M}$.

\begin{lemma}\label{lem: Edge characterization}
Let $M$ be a closed, orientable, irreducible 3-manifold.  Consider the action of a subgroup $H\cong \wh{\Z}^2$ of $\wh{\pi_1M}$ on $\wh{T}_M$.  One of the following holds.
\begin{enumerate}[(i)]
\item \label{item: Unique fixed point}The fixed point set of $H$ is a vertex with Seifert-fibred stabilizer.
\item \label{item: Unique fixed edge}The fixed point set of $H$ consists of exactly one edge.
\item \label{item: Non-trivial normalizer} The fixed subtree of $H$ consists of exactly two edges; the central vertex has a minor Seifert fibred stabilizer, and the other two vertices are major.
\end{enumerate}
Furthermore, if the centralizer $C_{\wh{\pi_1M}}(H)$ is properly contained in the normalizer $N_{\wh{\pi_1M}}(H)$, then we are in case (\ref{item: Unique fixed point}) or case (\ref{item: Non-trivial normalizer}).
\end{lemma}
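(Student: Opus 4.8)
The plan is to determine $\Fix(H) = \{x \in \wh{T}_M : Hx = x\}$ by a vertex-by-vertex analysis. Since $\wh{T}_M$ is $4$-acylindrical (Proposition \ref{prop: JSJ properties}) and $H$ is abelian and not procyclic, Lemma \ref{lem: Abelian acylindrical fixed point} shows $H$ fixes a vertex, so $\Fix(H)$ is a non-empty profinite subtree; as $\Fix(H)\subseteq\Fix(h)$ for any $h\in H\smallsetminus 1$ it also has diameter at most $4$. Everything then reduces to one local claim: at a vertex $v\in\Fix(H)$ with hyperbolic or major Seifert fibred stabiliser, $H$ fixes at most one incident edge — exactly one in the hyperbolic case — while at a minor Seifert fibred vertex $v$ (which has exactly two incident edges in $\wh{T}_M$, because the boundary torus subgroup has index two in $\pi_1$ of the twisted $I$-bundle over the Klein bottle) the group $H$ fixes both of them.

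I would prove the claim one vertex type at a time. For a hyperbolic $v$ with $\Stab(v)=\wh{\pi_1N}$: since $H\cong\wh{\Z}^2$ is not procyclic, Proposition \ref{prop: Cusp uniqueness} places $H$ inside the closure of a peripheral subgroup, which is precisely the stabiliser of some edge at $v$, so $H$ fixes that edge; if it fixed another, the two edge stabilisers would be closures of peripheral subgroups both containing $H$, and the uniqueness in Proposition \ref{prop: Cusp uniqueness} together with malnormality of peripheral $\wh{\Z}^2$'s (which underlies Lemma \ref{lem: Normalizer hyperbolic case}, using that distinct cusps are non-conjugate) forces the two edges to coincide. For a major Seifert fibred $v$ I would use the exact sequence $1\to\wh{Z}\to\wh{\pi_1N}\xrightarrow{f}\wh{\pi_1O}\to 1$ of profinite completions coming from the LERF property of $\pi_1N$ (as in Lemma \ref{lem: Normalizer major SF case}), with $\wh{Z}$ the fibre and $\pi_1O$ the virtually free base orbifold group: each edge group at $v$ contains $\wh{Z}$ and projects onto the closure of a peripheral cyclic subgroup of $\pi_1O$; since $\wh{\Z}^2$ does not embed into the profinite completion of a virtually free group (compare cohomological dimension after passing to a torsion-free finite-index subgroup), $f(H)\ne 1$, so $f(H)$ determines a unique conjugacy class of peripheral subgroup of $\pi_1O$, and — transferring malnormality of the peripheral structure of a Fuchsian group to its completion, which is the separability input behind Lemma \ref{lem: Normalizer major SF case} — this, with $N_{\wh{\pi_1N}}(\Stab(e))=C_{\wh{\pi_1N}}(\Stab(e))=\Stab(e)$, pins the edge down; if $f(H)$ is non-peripheral then $H$ is conjugate into no edge group and fixes no edge at $v$. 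For a minor $v$, $\Stab(v)$ is the profinite completion of the Klein bottle group, in which every $\wh{\Z}^2$ subgroup lies in the unique maximal such subgroup — normal of index two, and equal to the stabiliser of each of the two edges at $v$ — so $H$ fixes both edges.

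Granting the claim, I would assemble $\Fix(H)$: it is a connected profinite subtree in which hyperbolic and major vertices have valence at most $1$, while a minor vertex has valence exactly $2$ with both neighbours major (two minor vertices are never adjacent, by minimality of the JSJ). If $\Fix(H)$ contains a minor vertex $v$, then $v$, its two incident edges and their other endpoints $v_1,v_2$ all lie in $\Fix(H)$; as $v_1,v_2$ are major they have valence $1$ in $\Fix(H)$, so no geodesic inside $\Fix(H)$ issuing from $v$ can leave $v_1-v-v_2$, giving case (\ref{item: Non-trivial normalizer}). If $\Fix(H)$ has no minor vertex, every vertex has valence at most $1$, so $\Fix(H)$ is a single vertex or a single edge; a single vertex cannot have hyperbolic stabiliser (by the claim), so it is Seifert fibred — case (\ref{item: Unique fixed point}) — and a single edge is case (\ref{item: Unique fixed edge}). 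For the final clause, suppose $C_{\wh{\pi_1M}}(H)\subsetneq N_{\wh{\pi_1M}}(H)$: if we were in case (\ref{item: Unique fixed edge}), then $\Fix(H)$ would be a single edge $e$, and any $g\in N_{\wh{\pi_1M}}(H)$ would satisfy $g\cdot\Fix(H)=\Fix(gHg^{-1})=\Fix(H)$, hence (an element fixing an edge fixes its endpoints in this formalism) fix $e$ and its two endpoints, so $g\in\Stab(e)$; but $\Stab(e)\cong\wh{\Z}^2$ is abelian and contains $H$, so $g$ centralises $H$, contradicting $C\subsetneq N$. Thus $C_{\wh{\pi_1M}}(H)\subsetneq N_{\wh{\pi_1M}}(H)$ forces case (\ref{item: Unique fixed point}) or (\ref{item: Non-trivial normalizer}).

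I expect the main obstacle to be the valence bound at major Seifert fibred vertices, which rests on transferring malnormality (or at least self-normality) of the peripheral structure of the Fuchsian base orbifold group to its profinite completion; the minor-vertex analysis and the final clause should be comparatively routine.
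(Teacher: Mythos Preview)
Your proposal is correct and follows essentially the same approach as the paper's proof. Both arguments establish that $\Fix(H)$ is non-empty (via Lemma~\ref{lem: Abelian acylindrical fixed point} and Proposition~\ref{prop: JSJ properties}), bound the local valence in $\Fix(H)$ at major vertices to at most one (the paper invokes Lemmas~\ref{lem: Normalizer hyperbolic case} and~\ref{lem: Normalizer major SF case} directly, while you unpack the underlying malnormality of peripheral subgroups in the hyperbolic and Fuchsian completions), observe that minor vertices have exactly two incident edges both fixed by $H$, and then deduce the trichotomy; the final clause is handled identically in both, by noting that $N_{\wh{\pi_1M}}(H)$ preserves $\Fix(H)$ and hence lies in an abelian edge stabilizer in case~(\ref{item: Unique fixed edge}). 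Your write-up is simply more explicit than the paper's terse appeal to Lemmas~\ref{lem: Normalizer hyperbolic case} and~\ref{lem: Normalizer major SF case}, and your flagged ``main obstacle'' --- transferring malnormality of the peripheral structure to the profinite completion --- is exactly the content those lemmas encapsulate.
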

\begin{proof}
By Lemma \ref{lem: Abelian acylindrical fixed point} and Proposition \ref{prop: JSJ properties}, $H$ fixes a non-empty subtree. Recall that every edge of $\wh{T}_M$ adjoins a major vertex, and that every minor vertex of $\wh{T}_M$ adjoins exactly two edges.  If $H$ stabilizes an edge $e$ and an adjacent major vertex $v$ then, by Lemmas \ref{lem: Normalizer hyperbolic case} and \ref{lem: Normalizer major SF case}, $e$ is the unique edge incident at $v$ stabilized by $H$.  It follows that the fixed tree of $H$ is of one of the three claimed forms.

We now prove that, in case (\ref{item: Unique fixed edge}), $N_{\wh{\pi_1M}}(H)=C_{\wh{\pi_1M}}(H)$.   Indeed, $N_{\wh{\pi_1M}}(H)$ preserves the fixed subtree of $C_{\wh{\pi_1M}}(H)$, and so if $H$ fixes a unique edge, $N_{\wh{\pi_1M}}(H)$ is contained in an edge stabilizer, hence is abelian, and so $N_{\wh{\pi_1M}}(H)=C_{\wh{\pi_1M}}(H)$.
\end{proof}

We now have enough information to construct a map $\phi$.  To start with, it will only be a map of abstract, unoriented graphs.

\begin{lemma}\label{lem: Equivariant map}
Consider closed, orientable, irreducible 3-manifolds $M,M'$, and let $f:\wh{\pi_1M}\to\wh{\pi_1M'}$ be an isomorphism. Then there exists an $f$-equivariant morphism of graphs
\[
\phi:\wh{T}_M\to\wh{T}_{M'}~.
\]
Note that, here, we only claim that $\phi$ is a map of abstract, non-oriented graphs. This map may in principle send edges to either edges or vertices.
\end{lemma}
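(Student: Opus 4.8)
The plan is to build $\phi$ vertex-by-vertex using the fixed-point results of Lemmas \ref{lem: Hyperbolic fixed point}, \ref{lem: Major SF case}, and \ref{lem: Minor SF case}, then extend it to edges, and finally check continuity. Fix a vertex $v$ of $\wh T_M$ with stabilizer $G_v = \mathrm{Stab}_{\wh{\pi_1M}}(v)$; this is (a conjugate of) the profinite completion of the fundamental group of a JSJ piece $N$ of $M$. Transport the action via $f$: the group $f(G_v)$ acts on $\wh T_{M'}$, and since $\wh T_{M'}$ is $4$-acylindrical (Proposition \ref{prop: JSJ properties}) with abelian edge stabilizers, the relevant fixed-point lemma applies — Lemma \ref{lem: Hyperbolic fixed point} if $N$ is hyperbolic, Lemma \ref{lem: Major SF case} if $N$ is major Seifert fibred, Lemma \ref{lem: Minor SF case} if $N$ is minor Seifert fibred — and in every case $f(G_v)$ fixes a \emph{unique} vertex of $\wh T_{M'}$. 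Define $\phi(v)$ to be that unique vertex. Uniqueness is what makes this well-defined and, crucially, what forces $f$-equivariance: for $g \in \wh{\pi_1M}$, the stabilizer of $gv$ is $gG_vg^{-1}$, so $f(gG_vg^{-1}) = f(g)f(G_v)f(g)^{-1}$ fixes exactly the vertex $f(g)\phi(v)$, whence $\phi(gv) = f(g)\phi(v)$.

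Next I would extend $\phi$ over edges. Let $e$ be an edge of $\wh T_M$ with endpoints $v_0 = d_0(e)$, $v_1 = d_1(e)$. The stabilizer $G_e$ is contained in $G_{v_0} \cap G_{v_1}$, so $f(G_e)$ fixes both $\phi(v_0)$ and $\phi(v_1)$, hence fixes the geodesic $[\phi(v_0), \phi(v_1)]$ pointwise. I claim this geodesic is either a single vertex (the case $\phi(v_0) = \phi(v_1)$) or a single edge. Here is where the structure of the JSJ decomposition enters: every edge of $\wh T_M$ adjoins a major vertex, so at least one of $v_0, v_1$, say $v_0$, is major, and then $G_e \cong \wh\Z^2$ (it is the closure of a boundary torus group). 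Applying Lemma \ref{lem: Edge characterization} to the $\wh\Z^2$ subgroup $f(G_e) \le \wh{\pi_1M'}$ acting on $\wh T_{M'}$, its fixed subtree has diameter at most two, and by acylindricity any longer geodesic fixed by all of $f(G_e)$ would be impossible — more carefully, the geodesic $[\phi(v_0),\phi(v_1)]$ lies inside the fixed subtree of $f(G_e)$, which is one of the three types catalogued in Lemma \ref{lem: Edge characterization}; in the first two cases it has at most one edge, and in the third case (two edges, central minor vertex) I need to rule out that $\phi(v_0)$ and $\phi(v_1)$ are the two outer major vertices. This is ruled out because $f$ is an isomorphism: the preimage structure would force $v_0, v_1$ to be non-adjacent in $\wh T_M$, contradicting that they are the endpoints of $e$. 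So $[\phi(v_0),\phi(v_1)]$ is a vertex or a single edge $e'$; set $\phi(e) = e'$ in the latter case and $\phi(e) = \phi(v_0)$ in the former, choosing the orientation of $e'$ so that $d_0(e') = \phi(v_0)$, $d_1(e') = \phi(v_1)$. Equivariance on edges follows from equivariance on vertices together with the uniqueness of geodesics in a profinite tree.

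Finally, continuity of $\phi$: since $\wh T_M = \varprojlim \wh T_M / U$ over open normal $U \le \wh{\pi_1M}$, and each quotient graph is finite, it suffices to observe that $\phi$ is constant on cosets of a suitable open subgroup on each finite approximating level — this follows from the $f$-equivariance and the compactness of $\wh T_M$, exactly as in the construction of morphisms of profinite trees in \cite[Prop.~1.7]{zalesskii_subgroups_1988}. The main obstacle is the edge step: specifically, showing that $[\phi(v_0),\phi(v_1)]$ can never be the two-edge configuration of case (\ref{item: Non-trivial normalizer}) of Lemma \ref{lem: Edge characterization} with $\phi(v_0), \phi(v_1)$ the outer vertices. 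The cleanest way to handle this is probably to run the symmetric construction with $f^{-1}$ to get $\psi: \wh T_{M'} \to \wh T_M$ and argue that $\psi \circ \phi$ and $\phi \circ \psi$ are close enough to the identity (they fix every vertex, by uniqueness of fixed points) that $\phi$ cannot collapse adjacency in the bad way — but pinning down exactly which collapses are permitted, and deferring the proof that $\phi$ is actually an \emph{isomorphism} to a later lemma, is the delicate bookkeeping I would expect to occupy most of the argument.
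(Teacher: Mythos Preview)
Your overall architecture matches the paper's: define $\phi$ on vertices via the uniqueness in Lemmas \ref{lem: Hyperbolic fixed point}--\ref{lem: Minor SF case}, check equivariance exactly as you do, then extend to edges by bounding the distance between $\phi(v_0)$ and $\phi(v_1)$ via Lemma \ref{lem: Edge characterization}. Continuity is not needed here---the lemma only claims a map of abstract unoriented graphs, and the paper defers orientations and continuity to the proof of Theorem \ref{thm: main}---so that paragraph can be dropped.

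The genuine gap is in excluding the distance-two configuration. Your first attempt (``the preimage structure would force $v_0, v_1$ to be non-adjacent'') is not an argument: nothing you have established prevents $f$ from carrying $G_e$ to a $\wh{\Z}^2$ whose fixed tree in $\wh T_{M'}$ is the two-edge path of case (\ref{item: Non-trivial normalizer}) with $\phi(v_0),\phi(v_1)$ at the outer major vertices. Your fallback---build $\psi$ symmetrically and invoke $\psi\circ\phi=\mathrm{id}$ on vertices---does not resolve this either: you need $\phi(v_0),\phi(v_1)$ at distance $\le 1$ \emph{before} $\phi$ is even defined on edges, and bijectivity on vertices alone does not deliver that bound.

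The paper closes the gap with a normalizer argument you have not identified. If $\phi(u_1),\phi(u_2)$ are at distance two with minor centre $w$, the final clause of Lemma \ref{lem: Edge characterization} gives $C_G(G_e)\subsetneq N_G(G_e)$. Now apply Lemma \ref{lem: Edge characterization} a second time, \emph{in $\wh T_M$}: since $G_e$ stabilizes the edge $e$ we are in case (\ref{item: Non-trivial normalizer}) there too, so $e$ is adjacent to a minor vertex, say $u_2$, and $u_2$ is the centre of the fixed tree of $G_e$ in $\wh T_M$; hence $N_G(G_e)\subseteq G_{u_2}$. Since $G_w$ normalizes $G_e$, one obtains $G_w\subseteq G_{u_2}\subseteq G_{v_2}$, so the non-abelian group $G_w$ fixes the edge joining $w$ to $v_2$---contradicting that edge stabilizers are abelian. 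This double application of Lemma \ref{lem: Edge characterization}, transferring the normalizer condition from $\wh T_{M'}$ back to $\wh T_M$, is the missing ingredient.
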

\begin{proof}
For brevity, we write $G=\wh{\pi_1M}$ and $\wh{S}=\wh{T}_{M'}$, and let $G$ act on $\wh{S}$ via $f$.  Let $e$ be an edge of $\wh{T}_M$ with stabilizer $G_e\cong\wh{\Z}^2$.  Let $u_1,u_2$ be the adjacent vertices of $\wh{T}_M$.  Lemmas \ref{lem: Hyperbolic fixed point}, \ref{lem: Major SF case} and \ref{lem: Minor SF case} together guarantee the existence of unique vertices $v_1,v_2$ of $\wh{S}$ such that $G_{u_i}\subseteq G_{v_i}$ for both $i$. 

We claim that $v_1$ and $v_2$ are either equal or adjacent.  Suppose therefore that $v_1,v_2$ are at distance greater than $2$ (possibly infinite).  Then $G_e$ stabilizes the geodesic $[v_1,v_2]$ (see \cite[Corollary 2.9]{zalesskii_subgroups_1988} or \cite[Corollary 4.1.6]{ribes_profinite_2017}) and therefore, by Lemma \ref{lem: Edge characterization}, $v_1,v_2$ are at distance precisely two, are both adjacent to a minor vertex $w$, and $C_G(G_e)$ is properly contained in $N_G(G_e)$.  Therefore, by Lemma \ref{lem: Edge characterization}, $e$ is adjacent to a minor vertex; without loss of generality, we may assume that $u_1$ is major, $G_{u_1}\subseteq G_{v_1}$, and that $u_2$ is minor and $G_{u_2}\subseteq G_{v_2}$. But $G_w$ also normalizes $G_e$, so $G_w\subseteq G_{u_2}\subseteq G_{v_2}$. This implies that $G_w$ stabilizes an edge, which is absurd because  edge stabilizers are abelian. Therefore, $v_1$ and $v_2$ are either equal or adjacent. If they are equal to a vertex $v$, we set $\phi(e)=v$.  If they are adjacent, we set $\phi(e)$ to be the image  of the unique edge joining them.  This completes the construction of the map $\phi:\wh{T}_M\to\wh{S}$, which is equivariant by construction.
\end{proof}

We are now ready to complete the proof of the main theorem.

\begin{proof}[Proof of Theorem \ref{thm: main}]
Applying Lemma \ref{lem: Equivariant map} twice, we obtain maps of graphs
\[
\phi:\wh{T}_M\to\wh{T}_{M'}~,~\psi:\wh{T}_{M'}\to\wh{T}_M~,
\]
where $\phi$ is $f$-equivariant and $\psi$ is $f^{-1}$-equivariant.  Equivariance implies that
\[
g\psi\circ\phi(x)=\psi\circ\phi(gx)
\]
for all $g\in\wh{\pi_1M}$ and $x\in\wh{T}_M$, whence the stabilizer of $x$ is contained in the stabilizer of $\psi\circ\phi(x)$. Since vertex-stabilizers stabilize unique vertices, it follows that $\psi\circ\phi$ is equal to the identity on vertices, and hence on the whole of $\wh{T}_M$.  In particular, $\phi$  and $\psi$ induce isomorphisms of the finite quotient graphs $\wh{\pi_1M}\backslash \wh{T}_M$ and $\wh{\pi_1M}'\backslash \wh{T}_M'$; we may therefore choose consistent orientations on these graphs, which lift to equivariant orientations on the profinite trees $\wh{T}_M$ and $\wh{T}_{M'}$, which are respected by $\phi$ and $\psi$; this also implies continuity of $\phi$ and $\psi$.
\end{proof}

\bibliographystyle{alpha}

\end{document}